\newtheorem{proposition}{Proposition}[section]
\newtheorem{lemma}{Lemma}[section]
\newtheorem{definition}{Definition}[section]
\newtheorem{theorem}{Theorem}[section]
\theoremstyle{remark}
\newcommand{\sgn}{\text{sgn}}
\newcommand{\bbR}{\mathbb{R}}
\newcommand{\bbE}{\mathbb{E}}
\newcommand{\calD}{\mathcal{D}}
\newcommand{\diam}{\mathrm{diam}}
\newcommand{\calZ}{\mathcal{Z}}
\newcommand{\calF}{\mathcal{F}}
\newcommand{\calG}{\mathcal{G}}
\DeclareMathOperator*{\argmin}{arg\,min}
\DeclareMathOperator*{\argmax}{arg\,max}
\title{Stochastic In-Face Frank-Wolfe Methods for Non-Convex Optimization and Sparse Neural Network Training}
\author{%
Paul Grigas \ \ \ \ \ Alfonso Lobos \ \ \ \ \ Nathan Vermeersch\\
Department of Industrial Engineering and Operations Research, University of California, Berkeley\\
\texttt{\{pgrigas, alobos, nathan.vermeersch\}@berkeley.edu} \\
}
\begin{document}

\maketitle


\begin{abstract}
The Frank-Wolfe method and its extensions are well-suited for delivering solutions with desirable structural properties, such as sparsity or low-rank structure. We introduce a new variant of the Frank-Wolfe method that combines Frank-Wolfe steps and steepest descent steps, as well as a novel modification of the ``Frank-Wolfe gap'' to measure convergence in the non-convex case. We further extend this method to incorporate in-face directions for preserving structured solutions as well as block coordinate steps, and we demonstrate computational guarantees in terms of the modified Frank-Wolfe gap for all of these variants. We are particularly motivated by the application of this methodology to the training of neural networks with sparse properties, and we apply our block coordinate method to the problem of $\ell_1$ regularized neural network training. We present the results of several numerical experiments on both artificial and real datasets demonstrating significant improvements of our method in training sparse neural networks.
\end{abstract}

\section{Introduction}\label{sec:intro}
The Frank-Wolfe method (also called the conditional gradient method) and its extensions are often especially applicable in several areas of machine learning due to their low iteration costs and convenient structural properties. The Frank-Wolfe method has classically been applied and analyzed in the setting of smooth, constrained convex optimization problems; for a partial list of references in this setting, see \cite{frank-wolfe, DR1970, Dunn1978} for older references and see \cite{jaggi2013revisiting, harchaoui, GF-FW, freund2017extended} and the references therein for more recent work. At each iteration, the basic Frank-Wolfe method relies only on a single gradient evaluation and a single call to a linear optimization subroutine, wherein the method computes a minimizer of the linear approximation of the objective function over the feasible region and then updates the next iterate as a convex combination of this minimizer and the current iterate.

In this paper, we consider variants of the Frank-Wolfe method for non-convex stochastic optimization problems with mixed constrained and unconstrained variables. Our problem of interest is:
\begin{equation}\label{poi1}
\begin{array}{rcl}
F^* := & \min\limits_{x,y} & F(x, y) := \mathbb{E}_{z\sim \calD}[f(x,y,z)] \\
& \mathrm{s.t.} & x \in S, y \in \bbR^{q} \ ,
\end{array}
\end{equation}
where $S \subseteq \bbR^p$ is a compact and convex set, $z$ is a random variable in a probability space $\calZ$ with (possibly unknown) distribution $\cal D$, and $f(\cdot, \cdot, \cdot) : S \times \bbR^q \times \calZ \to \bbR$ is differentiable in $(x,y)$ for each fixed $z \in \calZ$. Note that we allow for the possibility of either $p = 0$ or $q = 0$, in which case only one of the two sets of variables $x$ and $y$ would be present in \eqref{poi1}. Herein, we develop and analyze several stochastic gradient algorithms that utilize Frank-Wolfe ``style'' steps in the $x$ variables and standard steepest descent steps in the $y$ variables. 

In many core machine learning methodologies, such as the setting of training neural networks, non-convexity is ubiquitous. Moreover, due to large training set sizes, stochastic algorithms (or related strategies) are also a necessity.
For several reasons, including increased interpretability, memory efficiency, and improved computation at prediction/inference time, \emph{structured models} (such as sparse networks, low-rank models, etc.) are often highly desirable.
In order to induce a structured model, one might consider a strategy such as pruning \cite{aghasi2017net, molchanov2016pruning, han2015learning} that modifies the model after the training procedure or one might consider a strategy that induces structured models throughout the training procedure. In this paper, we consider a method that falls into the latter approach based on extending the Frank-Wolfe method to problem \eqref{poi1}. The Frank-Wolfe method, which falls into the more general class of ``structure-enhancing'' algorithms, is particularly attractive because the dynamics of the algorithm directly helps to promote near-optimal well-structured (e.g, sparse, low-rank) solutions. In some optimization formulations, such well-structured solutions also lie on low-dimensional faces of the feasible region, which was a key motivation for the development of ``in-face'' directions (also referred to as alternative directions herein), including away steps \cite{gm, lacoste2015global}, and the in-face extended Frank-Wolfe method developed in \cite{freund2017extended} for the case of deterministic, smooth convex optimization and particularly matrix completion.


In this paper, we extend the methodology of the Frank-Wolfe method with in-face directions to the setting of stochastic non-convex optimization, and we also allow for the possibility of mixed structured and unstructured variables. In other words, the $x$ variables in \eqref{poi1} represent the variables that we would like to be well-structured (e.g., sparse edges in a neural network) and the $y$ variables are completely ``free.'' In Section \ref{sec:algorithm}, we develop a ``hybrid'' Frank-Wolfe steepest descent (FW-SD) method with alternative in-face direction steps that promote structured solutions in the $x$ variables. Although we refer to this as a single method, we prove computational guarantees for two versions:  the simple version without alternative direction steps, and the version with alternative direction steps. 
In the non-convex setting, the ``Frank-Wolfe gap'' function is often used to measure convergence (see, e.g., \cite{lacoste2016convergence, reddi2016stochastic}). 
We introduce a novel modification of the Frank-Wolfe gap that accounts for the mixed constrained and unconstrained variable structure, and all of our theoretical computational guarantees are stated in terms of the modified Frank-Wolfe gap. In particular, if $K$ denotes the total number of iterations and when the number of samples per iteration is $O(K)$, we demonstrate $O(1/K)$ convergence in terms of the expected squared modified Frank-Wolfe gap (i.e., its second moment) for the methods developed herein.
In Section \ref{sec:block}, we extend our method to problems with block coordinate structure. The computational guarantees for the block version is in the worst-case the same as the guarantee for the non-block method, however in practice the block method is effective due to its ability to use different step-sizes for each block.
Section \ref{sec:numerics} presents the results of some numerical experiments on the MNIST and CIFAR-10 datasets demonstrating the viability of our algorithm.




Stochastic gradient methods (also stochastic approximation) dates back to \cite{robbins1951}. For recent works related to stochastic gradient descent and its variants see, e.g., \cite{nemirovski2009robust}, \cite{bottou2010large}, \cite{lan2012optimal}, \cite{bottou2018optimization}, and the references therein. Mostly closely related to our work, at least in terms of the theoretical computational guarantees developed herein, is perhaps \cite{reddi2016stochastic} who study stochastic Frank-Wolfe methods with and without variance reduction in the non-convex setting. In Section \ref{sec:algorithm} we comment on how our results relate to \cite{reddi2016stochastic}.
\cite{ghadimi2016conditional} also studies Frank-Wolfe type methods for stochastic non-convex problems with a composite structure.
\cite{he2015semi} also studies, in the case of convex and related variational inequality problems, Frank-Wolfe type methods with a related ``semi-proximal'' decomposable structure.
Block coordinate Frank-Wolfe type methods have been studied in several contexts beginning with \cite{lacoste2012block}.
Some other related references examining variants and extensions of Frank-Wolfe method in the deterministic setting are \cite{lacoste2016convergence, jiang2016structured, khamaru2018convergence, nouiehed2018convergence, rao2015forward, lacoste2015global, braun2018blended}, and in the stochastic convex setting are \cite{hazan2016variance, goldfarb2017linear, lu2018generalized}.

{\bf Illustrative application: sparse neural network training.}\label{SparseNN}
Let us conclude the introduction by describing an illustrative application to sparse neural network training.
In general, it has been observed that structured neural networks are practically advantageous for several reasons. Networks with desirable structural properties, that are often tailored to the application in mind, are more efficient -- both computationally and statistically -- than general purpose feedforward networks. A general approach for conceptualizing and training well-structured networks is through edge weight sparsity. Indeed, a feedforward network with sparse edges offers a number of benefits including increased interpretability, reduced memory footprint, and reduced computation at prediction/inference time. In fact, in practical applications, inference time and memory footprint are sometimes major bottlenecks that are often overlooked during the training/designing of deep neural networks \cite{canziani2016analysis}. Finally, sparse networks offer a conceptual advantage in that they encompass several popular representations, such as the widely popular convolutional layers. For more discussion on the benefits of sparse networks and approaches for constructing them, see \cite{thom2013sparse, louizos2017learning}, for example.

The formulation for training a sparse neural network considered herein is based on $\ell_1$ regularization, which is a natural and widely popular idea for promoting sparsity as well as other benefits of regularization \cite{hastie2015statistical}. For simplicity, let us consider training a fully connected feedforward network in the general setting of supervised learning. (This model can easily be extended to an arbitrary directed acylic graph.) We consider a per-node regularization model where including an $\ell_1$ regularization constraint is optional for each node, hence we define the set $\mathcal{N}_R := \{(t,i) : \text{ node } i \text{ in layer } t \text{ imposes a regularization constraint}\}$ and we let $\delta_{t, i}$ denote the corresponding regularization parameter. The optimization model we consider is:
\begin{equation}\label{poi_network}
\min\limits_{w}\displaystyle\frac{1}{n}\sum_{i = 1}^n \ell(\hat{y}_w(x_i), y_i) \ , \ \ \ \text{s.t.} \ \ \|w^{t-1}_i\|_1 \leq \delta_{i, t} \ \text{ for all } (t,i) \in \mathcal{N}_R \ ,
\end{equation}
where $(x_1, y_1), \ldots, (x_n, y_n)$ is the training data (in this section only we use $x$ and $y$ to refer to data, in later sections they refer to the optimization variables), $\hat{y}_w(\cdot) : \bbR^d \to \bbR^l$ denotes the prediction function of the model parameterized by the collection of weights $w$, and $\ell(\cdot, \cdot) : \bbR^l \times \bbR^l \to \bbR$ is a differentiable loss function. Here $w^{t-1}_i$ is the vector of incoming edge weights at node $i$ in layer $t$.

We note that a very closely related model, albeit with less flexibility, has been studied in the improper learning setting by \cite{zhang2016l1}. Let us now point out a few salient features of the optimization model \eqref{poi_network}. First, note that the variables of \eqref{poi_network} can be partitioned into constrained variables and unconstrained variables, corresponding to $x$ and $y$ in \eqref{poi1}, respectively. Furthermore, the constrained variables in \eqref{poi_network} have a block coordinate decomposable structure, wherein each vector $w_{t-1, i}$ for $(t,i) \in \mathcal{N}_R$ is constrained only to lie in its own $\ell_1$ ball of radius $\delta_{i,t}$. This type of block coordinate decomposable structure is considered herein in Section \ref{sec:block}. 
Note that the $\ell_1$ ball constraints are intended to promote sparsity, and therefore algorithmic schemes that also promote sparsity, such as in-face directions, are highly desirable in this context. It is possible to also model additional types of network structures with different types of convex constraints that are amenable to the Frank-Wolfe method and its extensions. For example, node level sparsity can be modeled with group $\ell_1$ constraints and low-rank weight matrices between can be modeled with nuclear norm constraints.

\section{Stochastic Frank-Wolfe steepest descent method with in-face directions}\label{sec:algorithm}
Let us now return to studying the generic non-convex stochastic optimization problem \eqref{poi1} where $x$ is constrained to lie in a compact and convex set $S$ and $y$ is unconstrained.
As mentioned, our algorithm is based on using Frank-Wolfe steps in the $x$ variables and steepest descent steps in the $y$ variables (both with stochastic versions of the partial gradients). Let us first review some useful notation.

{\bf Notation.}
Let $\|\cdot\|_X$ be a given norm on the variables $x \in \mathbb{R}^p$, and let $\|\cdot\|_Y$ be a given norm on the variables $y \in \mathbb{R}^q$. The diameter of $S$ is $\diam(S) := \max_{x, \bar{x} \in S}\|x - \bar{x}\|_X$, and recall that $\diam(S) < +\infty$ since $S$ is bounded.
The dual norms associated with $\|\cdot\|_X$ and $\|\cdot\|_Y$ are denoted by $\|\cdot\|_{X\ast}$ and $\|\cdot\|_{Y\ast}$, respectively. Recall that $\|\cdot\|_{X\ast}$ is defined by $\|s\|_{X\ast} := \max_{x : \|x\| \leq 1}s^Tx$ and $\|\cdot\|_{Y\ast}$ is defined analogously.
We also use $\|\cdot\|$ to denote the ``Euclidean combination'' of the two norms $\|\cdot\|_X$ and $\|\cdot\|_Y$ as the norm on $(x, y) \in \bbR^p \times \bbR^q$, whereby $\|(x,y)\| := \sqrt{\|x\|_X^2 + \|y\|_Y^2}$. Note that the dual norm of $\|\cdot\|$ is also the ``Euclidean combination'' of $\|\cdot\|_{X\ast}$ and $\|\cdot\|_{Y\ast}$, whereby $\|(s,t)\|_\ast = \sqrt{\|s\|_{X\ast}^2 + \|t\|_{Y\ast}^2}$. 
The standard inner product between $s \in \bbR^p$ and $x \in \bbR^p$ is denoted by $s^Tx$, and that the inner product on $\bbR^p \times \bbR^q$ is the sum of the inner products on the two spaces, i.e, $(s,t)^T(x,y) := s^Tx + t^Ty$.
The notation $\nabla$ refers to gradients with respect to $(x,y)$, and $\nabla_x$ and $\nabla_y$ refers to partial gradients with respect to $x$ and $y$, respectively.
For a scalar $\alpha$, $\sgn(\alpha)$ is the sign of $\alpha$, which is equal to $-1$ if $\alpha < 0$, $+1$ if $\alpha > 0$ and $0$ if $\alpha = 0$.
The notation ``$\tilde v \leftarrow \argmax_{v \in S} \{f(v)\}$'' denotes assigning $\tilde v$ to be an arbitrary optimal solution of the problem $\max_{v \in S} \{f(v)\}$.

{\bf Assumptions.} Note that the choice of the norm $\|\cdot\|_{Y}$ directly affects the form of the steepest descent step. For example, if $\|\cdot\|_{Y}$ is the $\ell_2$ norm then the steepest descent step becomes a standard stochastic gradient step. Another relevant example is when $\|\cdot\|_{Y}$ is the $\ell_1$ norm, in which case the steepest descent step becomes a stochastic variant of a greedy coordinate descent step (see, e.g., \cite{nutini2015coordinate}). 
On the other hand, the choice of the norm $\|\cdot\|_X$ does not affect the direction of the Frank-Wolfe step in the $x$ variables but it does affect the step-size strategy employed herein.

We make the following assumptions regarding problem \eqref{poi1}:
\begin{enumerate}
\item[(A1)] The objective function $F(\cdot, \cdot)$ is smooth, i.e., there is a constant $L_{\nabla} > 0$ such that $\|\nabla F(x, y) - \nabla F(\bar{x}, \bar{y})\|_\ast \leq L_{\nabla}\|(x, y) - (\bar{x}, \bar{y})\|$ for all $x, \bar{x} \in S$ and $y, \bar{y} \in \bbR^q$.
\item[(A2)] The partial gradient with respect to $x$ is uniformly bounded, i.e., there is a constant $L_f > 0$ such that $\|\nabla_x f(x, y, z)\|_{X\ast} \leq L_f$ for all $x \in S, y \in \bbR^q$, and $z \in \calZ$.
\item[(A3)] The stochastic gradient has bounded variance, i.e., there is a constant $\sigma \geq 0$ such that $\bbE_{z \sim D}\left[\|\nabla f(x, y, z) - \nabla F(x, y)\|_\ast^2\right] \leq \sigma^2$ for all $x \in S$ and $y \in \bbR^q$.
\item[(A4)] We have knowledge of the constant $L_\nabla$ as well as a constant $\bar{C} > 0$ satisfying $\bar{C} \geq \max\left\{2L_{\nabla}\cdot\text{diam}(S)^2, L_f\cdot\text{diam}(S)\right\}$.
\end{enumerate}

{\bf Modified Frank-Wolfe gap.}
Since \eqref{poi1} is generally a non-convex problem, we measure convergence in terms of a 
modified version of the ``Frank-Wolfe'' gap function. Let us first define the function $\tilde G(\cdot, \cdot) : S \times \bbR^q \to \bbR_+$ by $\tilde G(\bar x, \bar y) := \max_{x \in S}\left\{\nabla_x F(\bar x, \bar y)^T(\bar x - x)\right\}$. Note that when the $y$ variables are not present, this is exactly the definition of the ``gap function'' due to \cite{hearn1982gap} and studied in the recent literature on Frank-Wolfe.
Definition \ref{def:gap} presents our modified gap function that accounts for both the $x$ and $y$ variables.
\begin{definition}\label{def:gap}
The modified Frank-Wolfe gap function $G(\cdot, \cdot) : S \times \bbR^q \to \bbR_+$ is the function given by
\begin{equation*}
G(\bar x, \bar y) := \tilde G(\bar x, \bar y)\sqrt{\frac{2L_\nabla}{\bar{C}}} ~+~ \|\nabla_y F(\bar x, \bar y)\|_{Y\ast} \ ,
\end{equation*}
where $\tilde G(\bar x, \bar y) := \max_{x \in S}\left\{\nabla_x F(\bar x, \bar y)^T(\bar x - x)\right\}$.
\end{definition}
Note that Definition \ref{def:gap} depends on the particular specification of the parameters $L_\nabla$ and $\bar{C}$, which is a slightly undesirable property. However, in the case when $\bar{C} = 2L_{\nabla}\cdot\text{diam}(S)^2$, then we have that $\sqrt{\frac{2L_\nabla}{\bar{C}}} = \frac{1}{\diam(S)}$, which is a natural way to normalize the function $\tilde G(\cdot, \cdot)$. Note again that when the $y$ variables are not present then the modified Frank-Wolfe gap reduces to a scaled version of the standard Frank-Wolfe gap, and when the $x$ variables are not present then it reduces to the norm of the gradient (which is also a standard metric in unconstrained non-convex optimization).
The use of the modified Frank-Wolfe gap is justified by Proposition \ref{prop:gap} below, which states that $G(\bar{x}, \bar{y}) = 0$ is a \emph{necessary condition} for any locally optimal solution $(\bar{x}, \bar{y})$.
\begin{proposition}\label{prop:gap}
Suppose that $(\bar{x}, \bar{y})$ is a locally optimal solution of problem \eqref{poi1}. Then, it holds that $G(\bar{x}, \bar{y}) = 0$.
\end{proposition}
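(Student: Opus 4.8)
The plan is to exploit the nonnegativity of $G$ together with the standard first-order necessary conditions for local optimality, handling the constrained block $x$ and the free block $y$ separately. First I would observe that $G(\bar x, \bar y) \geq 0$ always: taking $x = \bar x$ in the maximization defining $\tilde G$ shows $\tilde G(\bar x, \bar y) \geq 0$, the dual-norm term $\|\nabla_y F(\bar x, \bar y)\|_{Y\ast}$ is nonnegative by definition, and the scaling factor $\sqrt{2L_\nabla/\bar C}$ is strictly positive by (A4). Consequently it suffices to prove that each of the two individually nonnegative summands vanishes, i.e.\ that $\tilde G(\bar x, \bar y) = 0$ and $\nabla_y F(\bar x, \bar y) = 0$.

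The key preliminary observation is that joint local optimality of $(\bar x, \bar y)$ over $S \times \bbR^q$ decouples into block-wise local optimality. Since a product of sufficiently small neighborhoods of $\bar x$ (relative to $S$) and of $\bar y$ (in $\bbR^q$) is contained in a neighborhood on which $(\bar x, \bar y)$ is optimal, the point $\bar x$ is a local minimizer of the map $x \mapsto F(x, \bar y)$ over $S$, and $\bar y$ is a local minimizer of the map $y \mapsto F(\bar x, y)$ over $\bbR^q$.

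For the free block, I would invoke the classical unconstrained first-order condition: since $\bar y$ locally minimizes the differentiable function $y \mapsto F(\bar x, y)$ over the open set $\bbR^q$, its gradient must vanish, so $\nabla_y F(\bar x, \bar y) = 0$ and hence $\|\nabla_y F(\bar x, \bar y)\|_{Y\ast} = 0$. For the constrained block, I would use a directional-derivative argument enabled by convexity of $S$: fix an arbitrary $x \in S$ and consider the segment $\bar x + t(x - \bar x)$, which lies in $S$ for all $t \in [0,1]$. Local optimality gives $F(\bar x + t(x - \bar x), \bar y) \geq F(\bar x, \bar y)$ for all sufficiently small $t > 0$; dividing by $t$ and letting $t \to 0^+$ yields the variational inequality $\nabla_x F(\bar x, \bar y)^T(x - \bar x) \geq 0$, equivalently $\nabla_x F(\bar x, \bar y)^T(\bar x - x) \leq 0$. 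Taking the maximum over $x \in S$ gives $\tilde G(\bar x, \bar y) \leq 0$, which combined with $\tilde G(\bar x, \bar y) \geq 0$ forces $\tilde G(\bar x, \bar y) = 0$. Substituting both facts into the definition of $G$ then completes the argument.

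Because both components reduce to textbook first-order conditions, I do not anticipate a genuine obstacle. The only points requiring care are the justification that joint local optimality decouples into the two block conditions, and the verification that the one-sided directional derivative exists and equals $\nabla_x F(\bar x, \bar y)^T(x - \bar x)$; both are immediate from the differentiability of $F$ assumed in \eqref{poi1} and the convexity of $S$.
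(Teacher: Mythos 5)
Your proposal is correct and is essentially the paper's argument in contrapositive-free form: the paper supposes $G(\bar x,\bar y)>0$ and exhibits a feasible descent direction $(\tilde x-\bar x,0)$ or $(0,-\tilde y)$ to contradict local optimality, while you argue directly that local optimality forces $\nabla_y F(\bar x,\bar y)=0$ and the variational inequality $\nabla_x F(\bar x,\bar y)^T(x-\bar x)\ge 0$ for all $x\in S$, hence $\tilde G(\bar x,\bar y)=0$. The only difference is that you spell out the segment/directional-derivative limit that the paper leaves implicit in the phrase ``feasible descent direction,'' which is a matter of detail, not of method.
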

The proof of Proposition \ref{prop:gap}, as well as all other omitted proofs, is included in the supplementary materials.
In the convex case, we can also use the modified Frank-Wolfe gap to bound the objective function value optimality gap, as demonstrated by Proposition \ref{prop:gap_convex} below.
\begin{proposition}\label{prop:gap_convex}
Suppose that $F(\cdot, \cdot)$ is convex on $S \times \bbR^q$, and let $(x^\ast, y^\ast)$ denote an optimal solution of \eqref{poi1}. Consider a given feasible solution $(\bar{x}, \bar{y}) \in S \times \bbR^q$, and let $R \geq 0$ be a constant such that $\|\bar{y} - y^\ast\|_Y \leq R$. Then, it holds that:
\begin{equation*}
F(\bar{x}, \bar{y}) - F^\ast ~\leq~ \max\left\{\sqrt{\tfrac{\bar{C}}{2L_\nabla}}, R\right\} \cdot G(\bar{x}, \bar{y}) \ .
\end{equation*}
\end{proposition}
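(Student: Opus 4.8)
The plan is to start from convexity of $F$ and the first-order (gradient) inequality, then split the resulting inner product into its $x$- and $y$-components, bound each piece separately by the two terms appearing in $G$, and finally combine using the $\max$ constant to absorb the mismatched coefficients. Concretely, since $F$ is convex on $S \times \bbR^q$ and $(x^\ast, y^\ast)$ is optimal, the supporting-hyperplane inequality at $(\bar{x}, \bar{y})$ gives
\begin{equation*}
F(\bar{x}, \bar{y}) - F^\ast ~\leq~ \nabla_x F(\bar{x}, \bar{y})^T(\bar{x} - x^\ast) + \nabla_y F(\bar{x}, \bar{y})^T(\bar{y} - y^\ast) \ .
\end{equation*}

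For the $x$-term, I would use that $x^\ast \in S$, so the inner product is at most the maximum over $S$, which is exactly $\tilde{G}(\bar{x}, \bar{y})$ by its definition. For the $y$-term, I would apply the generalized Cauchy--Schwarz (Hölder) inequality with respect to the dual pairing of $\|\cdot\|_Y$ and $\|\cdot\|_{Y\ast}$, together with the hypothesis $\|\bar{y} - y^\ast\|_Y \leq R$, to obtain $\nabla_y F(\bar{x}, \bar{y})^T(\bar{y} - y^\ast) \leq R\,\|\nabla_y F(\bar{x}, \bar{y})\|_{Y\ast}$. Combining these two bounds yields
\begin{equation*}
F(\bar{x}, \bar{y}) - F^\ast ~\leq~ \tilde{G}(\bar{x}, \bar{y}) + R\,\|\nabla_y F(\bar{x}, \bar{y})\|_{Y\ast} \ .
\end{equation*}

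The final step is to match this against $G(\bar{x}, \bar{y}) = \tilde{G}(\bar{x}, \bar{y})\sqrt{2L_\nabla/\bar{C}} + \|\nabla_y F(\bar{x}, \bar{y})\|_{Y\ast}$. Setting $M := \max\{\sqrt{\bar{C}/(2L_\nabla)}, R\}$, I observe that $M\sqrt{2L_\nabla/\bar{C}} \geq 1$ (from $M \geq \sqrt{\bar{C}/(2L_\nabla)}$) and $M \geq R$. Since both $\tilde{G}(\bar{x}, \bar{y}) \geq 0$ (take $x = \bar{x}$ in the defining maximum) and $\|\nabla_y F(\bar{x}, \bar{y})\|_{Y\ast} \geq 0$, multiplying $G$ by $M$ scales each term up by a factor of at least $1$ and at least $R$, respectively, so $M\,G(\bar{x}, \bar{y}) \geq \tilde{G}(\bar{x}, \bar{y}) + R\,\|\nabla_y F(\bar{x}, \bar{y})\|_{Y\ast}$, which dominates $F(\bar{x}, \bar{y}) - F^\ast$.

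I do not anticipate a genuine obstacle here; the argument is a routine duality/convexity computation. The only point requiring a little care is the coefficient bookkeeping in the last step, namely verifying that the single constant $M$ simultaneously dominates the reciprocal normalization factor $\sqrt{2L_\nabla/\bar{C}}$ (for the $x$-term) and the radius $R$ (for the $y$-term); the nonnegativity of $\tilde{G}$ and of the dual-norm term is what guarantees that taking the maximum of the two constants suffices rather than requiring their sum.
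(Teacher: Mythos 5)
Your proposal is correct and follows essentially the same route as the paper's proof: the convexity gradient inequality, bounding the $x$-term by $\tilde{G}(\bar{x},\bar{y})$ using $x^\ast \in S$, bounding the $y$-term by $R\,\|\nabla_y F(\bar{x},\bar{y})\|_{Y\ast}$ via H\"older's inequality, and then absorbing the two coefficients into the single constant $\max\bigl\{\sqrt{\bar{C}/(2L_\nabla)}, R\bigr\}$. The only cosmetic difference is bookkeeping: the paper inserts the factor $\sqrt{\bar{C}/(2L_\nabla)}\cdot\sqrt{2L_\nabla/\bar{C}} = 1$ in front of $\tilde{G}$ before taking the maximum, whereas you verify directly that the maximum dominates both coefficients, which relies on the same nonnegativity facts.
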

Note that Proposition \ref{prop:gap_convex} requires existence of a constant $R \geq 0$ such that $\|\bar{y} - y^\ast\|_Y \leq R$. In the case that $(\bar{x}, \bar{y})$ corresponds to the iterate of some algorithm that is guaranteed to lie in a bounded initial level set of the function $F(\cdot, \cdot)$, then this constant $R$ is guaranteed to exist. For example, this is always the case for deterministic steepest descent. This level set condition is not guaranteed to hold for the stochastic algorithms that we study herein but we would expect this condition to hold in practice (with high probability) after sufficiently many iterations.
We also utilize Lemma \ref{lem:jensen_gap} below, which relates a stochastic estimate of the modified gap function to the above definition.
\begin{lemma}\label{lem:jensen_gap}
Let $(\bar{x}, \bar{y}) \in S \times \bbR^q$ be given and let $(\hat{g}, \hat{h})$ denote an unbiased stochastic estimate of $\nabla F(\bar{x}, \bar{y})$ such that $\bbE[\hat{g}] = \nabla_x F(\bar x, \bar{y})$ and $\bbE[\hat{h}] = \nabla_y F(\bar x, \bar{y})$. Define the random variables:
\begin{equation*}
\tilde G := \max_{x \in S}\left\{\hat{g}^T(\bar{x} - x)\right\} \ , \text{ and } \ \hat G := \tilde{G}\sqrt{\frac{2L_\nabla}{\bar{C}}} ~+~ \|\hat{h}\|_{Y\ast} \ .
\end{equation*}
Then, it holds that $\bbE[\hat{G}] \geq G(\bar x, \bar y)$.
\end{lemma}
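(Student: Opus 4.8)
The plan is to decompose $\hat G$ into its two additive pieces and to handle each with a separate application of Jensen's inequality, using linearity of expectation to recombine them at the end. By construction and since the scalar $\sqrt{2L_\nabla/\bar C}$ is deterministic and positive,
\[
\bbE[\hat G] ~=~ \sqrt{\tfrac{2L_\nabla}{\bar C}}\,\bbE[\tilde G] ~+~ \bbE\big[\|\hat h\|_{Y\ast}\big].
\]
Comparing with Definition \ref{def:gap}, it therefore suffices to establish the two inequalities $\bbE[\tilde G] \geq \tilde G(\bar x, \bar y)$ and $\bbE[\|\hat h\|_{Y\ast}] \geq \|\nabla_y F(\bar x, \bar y)\|_{Y\ast}$.

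For the first inequality, I would observe that the map $g \mapsto \max_{x \in S}\{g^T(\bar x - x)\}$ is a pointwise maximum of affine (indeed linear) functions of $g$, and hence is convex in $g$. Applying Jensen's inequality to this convex function with the random vector $\hat g$, and then using the unbiasedness hypothesis $\bbE[\hat g] = \nabla_x F(\bar x, \bar y)$, gives
\[
\bbE[\tilde G] ~=~ \bbE\Big[\max_{x \in S}\{\hat g^T(\bar x - x)\}\Big] ~\geq~ \max_{x \in S}\big\{\bbE[\hat g]^T(\bar x - x)\big\} ~=~ \tilde G(\bar x, \bar y),
\]
as desired.

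For the second inequality, the key observation is simply that any norm --- in particular $\|\cdot\|_{Y\ast}$ --- is a convex function. A direct application of Jensen's inequality to $\|\cdot\|_{Y\ast}$ with the random vector $\hat h$, followed by the unbiasedness hypothesis $\bbE[\hat h] = \nabla_y F(\bar x, \bar y)$, yields $\bbE[\|\hat h\|_{Y\ast}] \geq \|\bbE[\hat h]\|_{Y\ast} = \|\nabla_y F(\bar x, \bar y)\|_{Y\ast}$. Substituting the two inequalities into the displayed identity for $\bbE[\hat G]$ then completes the argument.

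The argument is essentially routine; there is no real obstacle beyond recognizing the two distinct convexity structures in play --- a support-function / max-of-linear form for the $x$-gap term, and the norm itself for the $y$-gradient term --- and being careful that the deterministic positive scalar $\sqrt{2L_\nabla/\bar C}$ passes cleanly through the expectation so that the recombination exactly reproduces $G(\bar x, \bar y)$ rather than a scaled version of it.
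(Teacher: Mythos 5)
Your proof is correct and is essentially the paper's argument: the paper simply bundles both terms into a single convex function $\psi(g,h) := \left(g^T\bar{x} + \max_{x \in S}\{-g^Tx\}\right)\sqrt{2L_\nabla/\bar{C}} + \|h\|_{Y\ast}$ and applies Jensen's inequality once, whereas you apply Jensen termwise (to the max-of-linear-functions part and to the norm part) and recombine by linearity of expectation. The two routes are mathematically identical, so there is nothing to fix.
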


\subsection{Stochastic Frank-Wolfe steepest descent (FW-SD) method with in-face directions}
We are now ready to present our stochastic Frank-Wolfe steepest descent method for problem \eqref{poi1}, which possibly incorporates ``alternative directions'' and is formally presented below in Algorithm \ref{alg:stochastic_alt_fw_sd}. Algorithm \ref{alg:stochastic_alt_fw_sd} includes a true/false variable, called $\mathrm{AlternativeDirections}$, which indicates whether to use the alternative direction step in the $x$ variables or not.
Each iteration of Algorithm first uses a stochastic estimate of the gradient $\nabla F(x_k, y_k)$ based on $b_k$ i.i.d. samples to perform standard Frank-Wolfe and steepest descent steps in the variables $x$ and $y$, respectively.
Note that the step-sizes $\bar{\alpha}_k$ and $\alpha_k$ are dynamic random variables depending on the stochastic gradients $\hat g_k$ and $\hat h_k$, which is in contrast to the step-sizes (such as constant step-sizes in \cite{reddi2016stochastic}) that have been previously considered in the literature on stochastic Frank-Wolfe methods in the non-convex setting. The dynamic step-sizes employed by Algorithm \ref{alg:stochastic_alt_fw_sd} are more ``adaptive'' than constant step-sizes and hence can have better practical performance.
If alternative directions are not used, then $x_{k+1}$ is simply determined by the stochastic Frank-Wolfe step. Otherwise, for the version with alternative directions, Step (4.) is the computation of the stochastic alternative direction step, which is based on a fresh stochastic gradient estimate in Step (4a.). Then, Step (4b.) represents the computation of a generic alternative direction $d_k$ and the corresponding step, which we elaborate on further below.
Finally, note that the output of Algorithm \ref{alg:stochastic_alt_fw_sd} is chosen uniformly at random from all past iterates, which is also equivalent to randomly sampling the total number of iterations prior to starting the algorithm.

\floatname{algorithm}{Algorithm}
\begin{algorithm}
\caption{Stochastic Frank-Wolfe steepest descent (FW-SD) Method with alternative directions} \label{alg:stochastic_alt_fw_sd}
\begin{algorithmic}
\STATE {\bf Initialize} at $x_0 \in S$, $y_0 \in \bbR^q$ $k \gets 0$, set $\mathrm{AlternativeDirections} \in \{\text{TRUE}, \text{FALSE}\}$. \\
\medskip

{\bf At iteration $k$:}
\STATE 1. Choose number of samples $b_k$, sample $z_{k,1}, \ldots, z_{k,b_k}$ i.i.d. from $\calD$ and compute:
\begin{description}
\item $\ \  \hat{g}_k \gets \frac{1}{b_k} \sum_{i = 1}^{b_k} \nabla_x f(x_k, y_k, z_{k,i})$
\item $\ \  \hat{h}_k \gets \frac{1}{b_k} \sum_{i = 1}^{b_k} \nabla_y f(x_k, y_k, z_{k,i})$
\end{description}\medskip
\STATE 2. Do Stochastic Frank-Wolfe Step:
\begin{description}
\item $\ \  \tilde x_k \gets \arg\min\limits_{x \in S}\{\hat{g}_k^Tx\}$
\item $\ \  \tilde G_k \gets \hat{g}_k^T(x_k - \tilde{x}_k)$
\item $\ \  \bar{x}_{k} \gets x_k + \bar{\alpha}_k(\tilde{x}_k - x_k)$ where $\bar\alpha_k := \tilde{G}_k/\bar{C}$
\item \ \ {\bf If $\mathbf{AlternativeDirections} = $ FALSE, then set} $x_{k+1} \gets \bar{x}_{k}$
\end{description}\medskip
\STATE 3. Do Stochastic steepest descent Step:
\begin{description}
\item $\ \ \tilde{y}_k \gets \arg\max\limits_{y \in \bbR^q}\{\hat{h}_k^T y : \|y\|_Y \le 1 \}$
\item $\ \  y_{k+1} \gets y_k - \alpha_k\tilde{y}_k$ where $\alpha_k := \|\hat{h}_k\|_{Y\ast}/2L_{\nabla}$
\end{description}\medskip
\STATE 4. {\bf If $\mathbf{AlternativeDirections} = $ TRUE, then do} Stochastic Alternative Direction Step:
\STATE \ \ \ \ 4a. Sample $\check{z}_{k,1}, \ldots, \check{z}_{k,b_k}$ i.i.d. from $\calD$ and compute 
\begin{description}
\item $\ \ \ \ \ \ \ \ \check{g}_k \gets \frac{1}{b_k} \sum_{i = 1}^{b_k} \nabla_x f(\bar{x}_k, y_{k+1}, \check{z}_{k,i})$
\end{description}\medskip
\STATE \ \ \ \ 4b. Compute a stochastic alternative direction $d_k$ (formally a measurable function of $\check{g}_k$)
\STATE \ \ \ \ \ \ \ \ \ \ satisfying $\check{g}_k^Td_k < 0$ and $\|d_{k}\|_{X} \leq \diam(S)$, and set:
\begin{description}
\item $\ \ \ \ \ \ \ \ A_k := - \check{g}_k^Td_k$
\item $\ \ \ \ \ \ \ \ \alpha^{\text{stop}}_k := \arg\max\limits_{\alpha \geq 0}\{\alpha : \bar{x}_k + \alpha d_k \in S\}$
\item $\ \ \ \ \ \ \ \ x_{k+1} \gets \bar{x}_k + \bar{\beta}_k d_k$ where $\bar{\beta}_k := \min\left\{A_k/\bar{C}, \alpha^{\text{stop}}_k\right\}$
\end{description}\medskip

{\bf After $K$ total iterations:}
\STATE {\bf Output:} $(\hat x_k, \hat y_k)$ chosen uniformly at random from $(x_0, y_0), \ldots, (x_{K}, y_K)$
\end{algorithmic}
\end{algorithm}




Theorem \ref{fw-sd-theorem} below presents our main computational guarantee for the stochastic Frank-Wolfe steepest descent method for the non-convex optimization problem \eqref{poi1}.
The statement of the theorem involves the maximum ratios between the Euclidean norm $\|\cdot\|_2$ and the given norm $\|\cdot\|$, defined by:
\begin{equation*}
\kappa_1 := \max_{(x,y) \ne 0} \|(x,y)\|_2/\|(x,y)\| \ , \ \ 
\kappa_2 := \max_{(x,y) \ne 0} \|(x,y)\|/\|(x,y)\|_2 \ .
\end{equation*}
(Note that $\|(x,y)\|_2$ is simply defined by $\|(x,y)\|_2 := \sqrt{\|x\|_2^2 + \|y\|_2^2}$.) Let us also define $\kappa := \kappa_1\kappa_2$. Note that norm equivalence on finite dimensional vector spaces ensures that $\kappa$ is finite, and in the case that $\|\cdot\|_X$ and $\|\cdot\|_Y$ are both the $\ell_2$ norm then $\kappa = 1$.
We also define a constant $\alpha_{\mathrm{AD}}$ that is useful in the statement of the theorem as well as later results. Specifically, we define $\alpha_{\mathrm{AD}} := 1$ if $\mathrm{AlternativeDirections} = \text{FALSE}$ and $\alpha_{\mathrm{AD}} := 2$ if $\mathrm{AlternativeDirections} = \text{TRUE}$.

\begin{theorem}\label{fw-sd-theorem}
Consider the Stochastic FW-SD Method, possibly with alternative directions (Algorithm \ref{alg:stochastic_alt_fw_sd}).
Under assumptions (A1)-(A4), it holds for all $K \geq 0$ that:
\begin{equation*}
\bbE[G(\hat{x}_K, \hat{y}_K)^2] ~\leq~ \frac{8L_\nabla(F(x_0, y_0) - F^\ast)}{K+1} ~+~ \frac{4\alpha_{\mathrm{AD}}\kappa^2\sigma^2}{K+1}\sum_{k = 0}^K\frac{1}{b_k} \ .
\end{equation*}
\end{theorem}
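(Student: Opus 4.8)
The plan is to establish a \emph{per-iteration} descent inequality of the form
\[
G(x_k,y_k)^2 \;\le\; 8L_\nabla\,\bbE\!\left[F(x_k,y_k) - F(x_{k+1},y_{k+1})\,\middle|\,\calF_k\right] \;+\; \tfrac{4\alpha_{\mathrm{AD}}\kappa^2\sigma^2}{b_k},
\]
where $\calF_k$ denotes the history through the start of iteration $k$, and then to sum over $k=0,\dots,K$, take total expectations, telescope the $F$-differences down to $F(x_0,y_0)-F^\ast$, divide by $K+1$, and use that the reported iterate is drawn uniformly from $(x_0,y_0),\dots,(x_K,y_K)$ so that the left-hand side averages to $\bbE[G(\hat x_K,\hat y_K)^2]$. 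Thus the theorem reduces entirely to the displayed one-step bound.

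For the FW${}+{}$SD part of the step (which produces $(\bar x_k, y_{k+1})$ and is the only update when $\mathrm{AlternativeDirections}=\text{FALSE}$), I would start from the descent lemma implied by (A1), namely $F(\bar x_k,y_{k+1}) - F(x_k,y_k) \le \nabla_x F^T\Delta x + \nabla_y F^T\Delta y + \tfrac{L_\nabla}{2}\|\Delta x\|_X^2 + \tfrac{L_\nabla}{2}\|\Delta y\|_Y^2$, where the Euclidean-combination norm makes the quadratic part separate with no cross term. Writing $\nabla_x F = \hat g_k - \xi_k$ and $\nabla_y F = \hat h_k - \zeta_k$ with $\xi_k,\zeta_k$ the conditionally mean-zero gradient noise, the stochastic inner products collapse via the closed-form step-sizes: $\hat g_k^T\Delta x = -\tilde G_k^2/\bar C$ and $\hat h_k^T\Delta y = -\|\hat h_k\|_{Y\ast}^2/2L_\nabla$. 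The quadratic terms are bounded using $\|\Delta x\|_X\le\bar\alpha_k\diam(S)$ and $\|\Delta y\|_Y\le\alpha_k$ together with (A4) in the form $L_\nabla\diam(S)^2\le\bar C/2$. The key move is to treat the first-order noise terms $-\xi_k^T\Delta x$ and $-\zeta_k^T\Delta y$ by Young's inequality, splitting each into a second-order piece of exactly the curvature that each step-size was designed to absorb, plus a genuine variance piece proportional to $\|\xi_k\|_{X\ast}^2$, resp.\ $\|\zeta_k\|_{Y\ast}^2$. Collecting terms, the clean decrease equals $\tfrac{\tilde G_k^2}{2\bar C} + \tfrac{\|\hat h_k\|_{Y\ast}^2}{4L_\nabla}$, which in the shorthand $a:=\tilde G_k\sqrt{2L_\nabla/\bar C}$, $b:=\|\hat h_k\|_{Y\ast}$ is $\tfrac{a^2+b^2}{4L_\nabla}\ge\tfrac{(a+b)^2}{8L_\nabla}=\tfrac{\hat G_k^2}{8L_\nabla}$ by $(a+b)^2\le 2(a^2+b^2)$. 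Taking $\bbE[\cdot\,|\,\calF_k]$ and invoking Lemma \ref{lem:jensen_gap} together with Jensen (both $\hat G_k$ and $G$ are nonnegative, so $\bbE[\hat G_k^2\,|\,\calF_k]\ge(\bbE[\hat G_k\,|\,\calF_k])^2\ge G(x_k,y_k)^2$) then yields the one-step bound with $\alpha_{\mathrm{AD}}=1$.

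The variance pieces are controlled by the $\kappa$ constants. Since $\|\cdot\|_\ast$ is not Euclidean, I would pass through $\ell_2$: the norm-equivalence bounds $\|s\|_\ast\le\kappa_1\|s\|_2$ and $\|s\|_2\le\kappa_2\|s\|_\ast$ give $\bbE[\|\hat\nabla_k-\nabla F\|_\ast^2\,|\,\calF_k]\le\kappa_1^2\,\bbE\|\tfrac1{b_k}\sum_i(\nabla f_i-\nabla F)\|_2^2 = \tfrac{\kappa_1^2}{b_k}\bbE\|\nabla f_1 - \nabla F\|_2^2\le\tfrac{\kappa_1^2\kappa_2^2}{b_k}\sigma^2 = \tfrac{\kappa^2\sigma^2}{b_k}$, where the middle equality uses independence and mean-zero to kill cross terms and the last uses (A3). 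As $\|\xi_k\|_{X\ast}^2+\|\zeta_k\|_{Y\ast}^2 = \|\hat\nabla_k - \nabla F\|_\ast^2$, the two variance pieces assemble into exactly $\tfrac{4\kappa^2\sigma^2}{b_k}$ after multiplying through by $8L_\nabla$.

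The main obstacle is the alternative-direction step, because $d_k$ and $\bar\beta_k$ are both functions of the fresh stochastic gradient $\check g_k$, so the noise term $-\bar\beta_k\eta_k^Td_k$ (with $\eta_k:=\check g_k-\nabla_xF(\bar x_k,y_{k+1})$) is \emph{not} conditionally mean-zero, and a naive Cauchy--Schwarz bound yields only an $O(1/\sqrt{b_k})$ error, which is too weak. The resolution reuses the Young trick: from the descent lemma applied to the $x$-only move $x_{k+1}=\bar x_k+\bar\beta_kd_k$, write $\nabla_x F^T(\bar\beta_k d_k) = -\bar\beta_kA_k - \bar\beta_k\eta_k^Td_k$, bound the smoothness quadratic by $\tfrac{\bar C}{4}\bar\beta_k^2$, and apply Young's inequality as $-\bar\beta_k\eta_k^Td_k\le\tfrac{\bar C}{4}\bar\beta_k^2 + \tfrac{\diam(S)^2}{\bar C}\|\eta_k\|_{X\ast}^2$. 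The two $\tfrac{\bar C}{4}\bar\beta_k^2$ terms combine to $\tfrac{\bar C}{2}\bar\beta_k^2$, so the deterministic remainder is exactly the stochastic quadratic model $-\bar\beta_kA_k + \tfrac{\bar C}{2}\bar\beta_k^2$, which is $\le 0$ because $\bar\beta_k=\min\{A_k/\bar C,\alpha^{\mathrm{stop}}_k\}$ is precisely its minimizer over $[0,\alpha^{\mathrm{stop}}_k]$ and it vanishes at $\beta=0$. Hence $F(x_{k+1},y_{k+1}) - F(\bar x_k,y_{k+1})\le\tfrac{\diam(S)^2}{\bar C}\|\eta_k\|_{X\ast}^2$, and taking conditional expectation over the fresh samples (using $\|\eta_k\|_{X\ast}^2\le\|\check\nabla_k-\nabla F\|_\ast^2$ and the same $\kappa$-variance bound) shows the AD step increases $F$ in expectation by at most $\tfrac{\kappa^2\sigma^2}{2L_\nabla b_k}$. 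Combining this with the FW${}+{}$SD bound via the tower property (so the telescoped quantity becomes $F(x_k,y_k)-F(x_{k+1},y_{k+1})$) costs exactly one extra $\tfrac{4\kappa^2\sigma^2}{b_k}$, producing the factor $\alpha_{\mathrm{AD}}=2$ and completing the one-step bound in the TRUE case.
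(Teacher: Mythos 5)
Your proposal is correct and follows essentially the same route as the paper's proof: the same per-iteration lemma (smoothness plus Young's inequality to split gradient noise into curvature absorbed by the dynamic step-sizes and a variance term controlled through $\ell_2$ by the $\kappa$-constants, then Lemma \ref{lem:jensen_gap} with Jensen on $t \mapsto t^2$), the same decomposition $F(x_k,y_k)-F(x_{k+1},y_{k+1}) = [F(x_k,y_k)-F(\bar x_k,y_{k+1})] + [F(\bar x_k,y_{k+1})-F(x_{k+1},y_{k+1})]$ with the tower property for the alternative-direction step, the same observation that $\bar\beta_k$ minimizes the stochastic quadratic model so the deterministic part of the AD step is nonpositive, and the same telescoping/uniform-output argument. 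The only cosmetic difference is your choice of Young parameter in the AD bound ($\bar C/4$ versus the paper's $L_\nabla$), which yields the same conclusion.
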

Based on Theorem \ref{fw-sd-theorem}, setting $b_k = K$ at each iteration of Algorithm \ref{alg:stochastic_alt_fw_sd} leads to an $O(1/K)$ convergence bound whereas setting $b_k = k$ would lead to an $O(\ln(K)/K)$ bound.
Note that, as compared to previous related results for the Frank-Wolfe method in the non-convex case developed in \cite{reddi2016stochastic}, Theorem \ref{fw-sd-theorem} obtains a similar bound but has a few differences. In addition to the novel extensions herein of including steepest descent steps in the $y$ variables and possibly incorporating alternative direction steps in $x$ variables, note that Theorem \ref{fw-sd-theorem} holds for the dynamic step-size rule of Algorithm \ref{alg:stochastic_alt_fw_sd} whereas \cite{reddi2016stochastic} studies a constant step-size rule. Also note that Theorem \ref{fw-sd-theorem} bounds the second moment of the modified Frank-Wolfe gap whereas \cite{reddi2016stochastic} bounds the first moment of the Frank-Wolfe gap.
The proof of Theorem \ref{fw-sd-theorem} is included in the supplement.



{\bf Examples of alternative ``in-face'' directions.}
Step (4b.) of Algorithm \ref{alg:stochastic_alt_fw_sd} is written in a purposefully generic way that does not specify precisely how to compute the alternative direction $d_k$ so that we may consider a wide framework that accommodates several computationally advantageous choices. 
At the same time, the intuitive idea of the role of alternative directions in the convergence analysis of Algorithm \ref{alg:stochastic_alt_fw_sd} is that alternative directions should do no harm in terms of the modified Frank-Wolfe gap convergence. Let us now present several concrete examples of alternative directions, which all have the property of also being in-face directions. Given any feasible $x \in S$, we denote ${\cal F}_S(x)$ as the minimal face of $S$ that contains the point $x$.
Given $\bar{x}_k$ computed in Step (2.) of Algorithm \ref{alg:stochastic_alt_fw_sd}, $d_k$ is an in-face direction if $\bar{x}_k + \alpha d_k \in {\cal F}_S(\bar{x}_k)$ for all $\alpha \in [0, \alpha^{\text{stop}}_k]$.
One possible in-face direction is the ``away step'' direction introduced in \cite{gm} obtained by choosing $d_k \gets x_k - \check x_k \ , \ \ \ \text{where} \ \ \ \check x_k \gets  \arg\max_{x \in {\cal F}_S(\bar{x}_k)}\{\check{g}_k^Tx\}$.
Due to the facial structure of $S$, in-face directions often preserve certain types of solution structures. For example, when $S$ is an $\ell_1$ ball, then an in-face step preserves sparsity so that $x_{k+1}$ has the the same signed sparsity pattern as $\bar{x}_k$. In-face directions, including away steps, are also often as simple to compute or even simpler to compute than the Frank-Wolfe directions. Another example of an in-face directions pertinent to the non-convex setting include a regular Frank-Wolfe direction inside ${\cal F}_S(\bar{x}_k)$.
Additional discussion of how to compute in-face directions in the case where $S$ is an $\ell_1$ ball is included in the supplementary materials.

\section{Block coordinate extension}\label{sec:block}
In this section, we extend the previously developed stochastic FW-SD method with alternative directions to the block coordinate setting. We consider an extension of problem \eqref{poi1} where the variables $x$ as well as the feasible region $S$ for $x$ have a block coordinate structure. Specifically, we presume that $x \in S \subseteq \bbR^p$ has a decomposable block coordinate structure across $N \geq 1$ total blocks, whereby $x = (x^{(1)}, \ldots, x^{(N)})$, $S = S_1 \times \cdots \times S_N$, and each $x^{(i)} \in S_i \subseteq \bbR^{p_i}$ where $S_i$ is a compact and convex set and with $\sum_{i = 1}^N p_i = p$.
For each $i \in \{1, \ldots, N\}$, let $\|\cdot\|_{X,i}$ denote the given norm on the space of variables $x^{(i)} \in \bbR^{p_i}$ with dual norm denoted by $\|\cdot\|_{X\ast,i}$. The norm $\|\cdot\|_{X}$ on the overall space of $x = (x^{(1)}, \ldots, x^{(N)})$ variables is now taken to be the Euclidean combination of all of the block norms, i.e., we define $\|x\|_X := \sqrt{\sum_{i = 1}^N \|x^{(i)}\|_{X,i}^2}$. Note that the overall norm on the entire space of variables $(x,y)$ is the same as before, i.e., $\|(x,y)\| := \sqrt{\|x\|_X^2 + \|y\|_Y^2}$.
Furthermore, recall that $\diam(S_i) := \max_{x^{(i)}, \bar{x}^{(i)} \in S}\|x^{(i)} - \bar{x}^{(i)}\|_{X,i}$. We use the notation $\nabla_x^{(i)}$ to refer to partial gradients with respect to $x^{(i)}$ for each $i \in \{1, \ldots, N\}$.

In this block coordinate setting, we retain the earlier assumptions (A1) and (A3) and modify assumptions (A2) and (A4) as follows:
\begin{enumerate}
\item[(A2-B)] For each block $i \in \{1, \ldots, N\}$, the partial gradient with respect to $x^{(i)}$ is uniformly bounded, i.e., there is a constant $L_{f,i} > 0$ such that $\|\nabla_x^{(i)} f(x, y, z)\|_{X\ast,i} \leq L_{f,i}$ for all $x \in S, y \in \bbR^q$, and $z \in \calZ$.
\item[(A4-B)] We have knowledge of the constant $L_\nabla$ as well as constants $\bar{C}_i > 0$ satisfying $\bar{C}_i \geq \max\left\{2L_{\nabla}\cdot\text{diam}(S_i)^2, L_{f,i}\cdot\text{diam}(S_i)\right\}$ for each block $i \in \{1, \ldots, N\}$.
\end{enumerate}

Notice that the decomposable structure of $S$, i.e., $S = S_1 \times \cdots \times S_N$ implies that linear optimization problems are completely separable across the $N$ different blocks and that the modified Frank-Wolfe gap also has a similar decomposable structure.
For each block $i \in \{1, \ldots, N\}$, let us define $\tilde G_i(\cdot, \cdot) : S \times \bbR^q \to \bbR_+$ by $\tilde G_i(\bar x, \bar y) := \max_{x^{(i)} \in S_i}\left\{\nabla_x^{(i)} F(\bar x, \bar y)^T(\bar{x}^{(i)} - x^{(i)})\right\}$. Then, the function $\tilde{G}(\cdot, \cdot)$ defined in Section \ref{sec:algorithm} satisfies $\tilde{G}(\bar{x}, \bar{y}) = \sum_{i = 1}^N \tilde G_i(\bar x, \bar y)$ for all $x \in S$ and $y \in \bbR^q$. Moreover, in light of Assumption (A4B), we have that $\sum_{i = 1}^N \bar{C}_i \geq 2L_{\nabla}\sum_{i = 1}^N \diam(S)_i^2 = 2L_{\nabla}\diam(S)^2$. Hence, we define $\bar{C} := \sum_{i = 1}^N \bar{C}_i$, which is needed to specify the modified Frank-Wolfe gap function.

The main idea of the block coordinate version of Algorithm \ref{alg:stochastic_alt_fw_sd} is to replace the stochastic Frank-Wolfe step in Step (2.) and the alternative direction step in Step (4.) with block coordinate versions that use different step-sizes in each of the different blocks. Subroutines \ref{alg:block_fw} and \ref{alg:block_alt} below precisely describe how the block variants of these two steps work. 

\floatname{algorithm}{Subroutine}
\begin{algorithm}
\caption{Block Coordinate Stochastic Frank-Wolfe Step} \label{alg:block_fw}
\begin{algorithmic}
\STATE For each $i = 1, \ldots, N$, set:
\begin{description}
\item $\ \  \tilde x_k^{(i)} \gets \arg\min\limits_{x^{(i)} \in S_i}\{(\hat{g}_k^{(i)})^Tx^{(i)}\}$
\item $\ \  \tilde G_k^{i} \gets (\tilde{g}_k^{(i)})^T(x_k^{(i)} - \tilde{x}_k^{(i)})$
\item $\ \  \bar{x}^{(i)}_{k} \gets x_k^{(i)} + \bar{\alpha}_k^{i}(\tilde{x}_k^{(i)} - x_k^{(i)})$ where $\bar\alpha_k^i := \tilde{G}_k^{i}/\bar{C}_i$. 
\end{description}
\end{algorithmic}
\end{algorithm}

\floatname{algorithm}{Subroutine}
\begin{algorithm}
\caption{Block Coordinate Stochastic Alternative Direction Step} \label{alg:block_alt}
\begin{algorithmic}
\STATE 4a. Sample $\check{z}_{k,1}, \ldots, \check{z}_{k,b_k}$ i.i.d. from $\calD$ and compute 
\begin{description}
\item $\ \  \check{g}_k \gets \frac{1}{b_k} \sum_{i = 1}^{b_k} \nabla_x f(\bar{x}_k, y_{k+1}, \check{z}_{k,i})$
\end{description}\medskip
\STATE 4b. Compute a stochastic alternative direction $d_k$ (formally a measurable function of $\check{g}_k$) satisfying $(\check{g}_k^{(i)})^Td_k^{(i)} < 0$ and $\|d_{k}^{(i)}\|_{X,i} \leq \diam(S_i)$ for all $i \in \{1, \ldots, N\}$. For each $i \in \{1, \ldots, N\}$, set:
\begin{description}
\item $\ \ A_k^i := - (\check{g}_k^{(i)})^Td_k^{(i)}$
\item $\ \ \alpha^{\text{stop}, i}_k := \arg\max\limits_{\alpha \geq 0}\{\alpha : \bar{x}_k^{(i)} + \alpha d_k^{(i)} \in S_i\}$
\item $\ \  x_{k+1}^{(i)} \gets \bar{x}^{(i)}_k + \bar{\beta}_k^i d_k^{(i)}$ , $\bar{\beta}_k^i := \min\left\{A_k^i/\bar{C}_i, \alpha^{\text{stop}, i}_k\right\}$
\end{description}
\end{algorithmic}
\end{algorithm}


\begin{theorem}\label{block-alt-fw-sd-theorem}
Consider the Block Coordinate Stochastic FW-SD Method, possibly with alternative directions, i.e., Algorithm \ref{alg:stochastic_alt_fw_sd} with Step (2.) replaced with Subroutine \ref{alg:block_fw} and Step (4.) replaced with Subroutine \ref{alg:block_alt}. 
Under assumptions (A1), (A2B), (A3), and (A4B), it holds for all $K \geq 0$ that:
\begin{equation*}
\bbE[G(\hat{x}_K, \hat{y}_K)^2] ~\leq~ \frac{8L_\nabla(F(x_0, y_0) - F^\ast)}{K+1} ~+~ \frac{4\alpha_{\mathrm{AD}}\kappa^2\sigma^2}{K+1}\sum_{k = 0}^K\frac{1}{b_k} \ ,
\end{equation*}
where the modified Frank-Wolfe gap $G(\cdot, \cdot)$ (Definition \ref{def:gap}) is defined using $\bar{C} := \sum_{i = 1}^N \bar{C}_i$.
\end{theorem}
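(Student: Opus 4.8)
The plan is to follow the same architecture as the proof of Theorem~\ref{fw-sd-theorem}, reducing the block-coordinate per-iteration analysis to an expression identical to the single-block one, after which the stochastic error control and telescoping arguments carry over essentially verbatim. First I would invoke the smoothness assumption (A1) via the standard descent lemma to bound $F(\bar x_k, y_{k+1})$ and then $F(x_{k+1}, y_{k+1})$ in terms of $F(x_k,y_k)$, the true-gradient linear terms along the Frank--Wolfe, steepest-descent, and alternative directions, and a quadratic remainder $\tfrac{L_\nabla}{2}\|(\cdot,\cdot)\|^2$. The crucial structural observation is that, because $\|\cdot\|_X$ is the Euclidean combination $\|x\|_X=\sqrt{\sum_i\|x^{(i)}\|_{X,i}^2}$ of the block norms, both the linear term $\nabla_x F^T(\bar x_k - x_k)=\sum_i \nabla_x^{(i)}F^T(\bar x_k^{(i)}-x_k^{(i)})$ and the $x$-part of the quadratic remainder $\tfrac{L_\nabla}{2}\|\bar x_k - x_k\|_X^2=\tfrac{L_\nabla}{2}\sum_i\|\bar x_k^{(i)}-x_k^{(i)}\|_{X,i}^2$ decompose additively across blocks (the $y$-part separates off cleanly for the same reason). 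This lets me treat each block's Frank--Wolfe step in isolation.

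For block $i$, substituting $\bar x_k^{(i)}-x_k^{(i)}=\bar\alpha_k^i(\tilde x_k^{(i)}-x_k^{(i)})$ with $\bar\alpha_k^i=\tilde G_k^i/\bar C_i$, using $\|\tilde x_k^{(i)}-x_k^{(i)}\|_{X,i}\le\diam(S_i)$ and the inequality $L_\nabla\diam(S_i)^2\le \bar C_i/2$ from (A4-B) to absorb the quadratic remainder, I obtain a per-block decrease of the form $-\tfrac{3(\tilde G_k^i)^2}{4\bar C_i}$ plus a stochastic error term $\bar\alpha_k^i(\nabla_x^{(i)}F-\hat g_k^{(i)})^T(\tilde x_k^{(i)}-x_k^{(i)})$ arising from replacing the true gradient by $\hat g_k^{(i)}$ (recall $(\hat g_k^{(i)})^T(\tilde x_k^{(i)}-x_k^{(i)})=-\tilde G_k^i$ by the definition in Subroutine~\ref{alg:block_fw}). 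Summing over $i$ and applying Cauchy--Schwarz in the form $\big(\sum_i \tilde G_k^i\big)^2=\big(\sum_i \tfrac{\tilde G_k^i}{\sqrt{\bar C_i}}\sqrt{\bar C_i}\big)^2\le\big(\sum_i\tfrac{(\tilde G_k^i)^2}{\bar C_i}\big)\big(\sum_i\bar C_i\big)$, together with $\tilde G_k=\sum_i\tilde G_k^i$ and $\bar C=\sum_i\bar C_i$, yields $\sum_i\tfrac{(\tilde G_k^i)^2}{\bar C_i}\ge \tfrac{\tilde G_k^2}{\bar C}$. Hence the aggregate Frank--Wolfe decrease is at least as large as in the single-block analysis run with the aggregated gap $\tilde G_k$ and aggregated constant $\bar C$. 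The steepest-descent step in $y$ is unchanged from the non-block setting, and the block alternative-direction step of Subroutine~\ref{alg:block_alt} is handled identically block by block: its contribution is nonpositive up to a second stochastic error term coming from the fresh sample $\check g_k$, which is precisely what forces the constant $\alpha_{\mathrm{AD}}=2$.

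With the per-iteration inequality now identical to the single-block case, I would take conditional expectations to eliminate the first-moment error terms (using unbiasedness of $\hat g_k$ and $\hat h_k$), bound the residual second-moment contributions by $\kappa^2\sigma^2/b_k$ using assumption (A3) together with the norm-ratio constants $\kappa_1,\kappa_2$, invoke Lemma~\ref{lem:jensen_gap} to pass from $\tilde G_k$ and $\hat h_k$ back to the true modified gap $G(x_k,y_k)$, telescope the resulting recursion over $k=0,\dots,K$, and finally average using the uniform-at-random choice of the output iterate. I expect the main obstacle to be exactly the aggregation step: verifying that the Cauchy--Schwarz bound $\sum_i(\tilde G_k^i)^2/\bar C_i\ge \tilde G_k^2/\bar C$ combines with the block-wise decomposition of the smoothness remainder to reproduce the single-block decrease with \emph{no} loss in the constants. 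Everything downstream---the stochastic bookkeeping and the telescoping---is then a repetition of the proof of Theorem~\ref{fw-sd-theorem}, which is precisely why the block bound coincides with the non-block bound.
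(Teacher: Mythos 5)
Your architecture matches the paper's: use the Euclidean-combination structure of $\|\cdot\|_X$ to decompose the descent lemma additively across blocks, analyze each block's Frank--Wolfe step with its own step-size $\bar{\alpha}_k^i = \tilde{G}_k^i/\bar{C}_i$, re-aggregate via the Cauchy--Schwarz inequality $\bigl(\sum_{i=1}^N \tilde{G}_k^i\bigr)^2 \leq \bigl(\sum_{i=1}^N (\tilde{G}_k^i)^2/\bar{C}_i\bigr)\bigl(\sum_{i=1}^N \bar{C}_i\bigr)$, treat the alternative-direction step as a nonpositive contribution plus a second variance term (hence $\alpha_{\mathrm{AD}}=2$), and finish with Lemma \ref{lem:jensen_gap}, Jensen's inequality, telescoping, and the uniform-at-random output. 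This is precisely the paper's Lemma \ref{block-alt-fw-sd-lemma} followed by the argument of Theorem \ref{fw-sd-theorem}, and even your intermediate per-block constant $-\tfrac{3}{4}(\tilde{G}_k^i)^2/\bar{C}_i$ is what the paper has before the stochastic error term is dealt with.

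The genuine gap is your disposal of those stochastic error terms: you propose to ``take conditional expectations to eliminate the first-moment error terms (using unbiasedness of $\hat{g}_k$ and $\hat{h}_k$).'' That step fails. The cross term $\bar{\alpha}_k^i(\nabla_x^{(i)}F(x_k,y_k) - \hat{g}_k^{(i)})^T(\tilde{x}_k^{(i)} - x_k^{(i)})$ is \emph{not} conditionally mean-zero, because the dynamic step-size $\bar{\alpha}_k^i = \tilde{G}_k^i/\bar{C}_i$ and the vertex $\tilde{x}_k^{(i)} \in \argmin_{x^{(i)} \in S_i}\{(\hat{g}_k^{(i)})^Tx^{(i)}\}$ are themselves functions of the very same realization $\hat{g}_k$; the noise is correlated with the random direction and step length it multiplies. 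The same objection applies to the $y$-step ($\alpha_k$ and $\tilde{y}_k$ depend on $\hat{h}_k$) and to the alternative-direction step ($\bar{\beta}_k^i$ and $d_k$ depend on $\check{g}_k$). Note also the internal inconsistency: if unbiasedness truly annihilated these terms, no variance contribution would survive at all, yet your final bound (correctly) contains $\kappa^2\sigma^2/b_k$ terms --- those terms are not ``residuals'' of a vanished first moment. The paper instead bounds each cross term \emph{deterministically}, before any expectation is taken, via Young's inequality $(s^{(i)})^Tx^{(i)} \leq \tfrac{1}{2L_\nabla}\|s^{(i)}\|_{X\ast,i}^2 + \tfrac{L_\nabla}{2}\|x^{(i)}\|_{X,i}^2$: this produces a variance term $\tfrac{1}{2L_\nabla}\|\nabla_x^{(i)}F(x_k,y_k) - \hat{g}_k^{(i)}\|_{X\ast,i}^2$, whose conditional expectation is controlled by (A3) together with Lemma \ref{var_lem}, plus a quadratic term $\tfrac{L_\nabla}{2}(\bar{\alpha}_k^i)^2\|\tilde{x}_k^{(i)} - x_k^{(i)}\|_{X,i}^2 \leq \tfrac{1}{4}\bar{C}_i(\bar{\alpha}_k^i)^2$ that is absorbed by exactly the quarter-margin you left unused, degrading the per-block decrease from your $-\tfrac{3}{4}(\tilde{G}_k^i)^2/\bar{C}_i$ to the paper's $-\tfrac{1}{2}(\tilde{G}_k^i)^2/\bar{C}_i$, which still suffices (after multiplying by $8L_\nabla$ and using $(a+b)^2 \leq 2(a^2+b^2)$) to produce $G(x_k,y_k)^2$. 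With Young's inequality substituted for the unbiasedness argument, your proof goes through and coincides with the paper's.
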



\section{Numerical Experiments}\label{sec:numerics}
Let us now discuss our illustrative numerical experiments wherein we applied the block coordinate version Algorithm \ref{alg:stochastic_alt_fw_sd} studied in Section \ref{sec:block} to the $\ell_1$ regularized neural network training problem \eqref{poi_network} on both synthetic and real datasets.
We used PyTorch \cite{paszke2017automatic} to write an optimizer that partitions the layers into:  {\em (i)} Frank-Wolfe layers whose weights correspond to the $x$ variables in \eqref{poi1}, and {\em (ii)} SGD layers whose weights correspond to the $y$ variables in \eqref{poi1} and with the $\ell_2$ norm used for the steepest descent steps.
For the type of in-face direction in the Frank-Wolfe layers, we used away steps on the $\ell_1$ ball as described earlier and elaborated on further in the supplementary materials.
We initialize the weights of the Frank-Wolfe layers in such a way that each node has at least one non-zero edge coming in and another coming out. Since the Lipschitz constant may not be known in practice, we used cross validation on a held out validation set to tune the parameter $L_{\nabla}$ over the range $L_\nabla =4^{i}$ with $i \in \{-1,0,\ldots,6\}$.
Finally, since our method is not much more complex than SGD and is supported by rigorous computational guarantees, our experiments are intended to be illustrative.
In particular, we would like to illustrate the potential advantages of incorporating Frank-Wolfe layers on top of layers that use standard SGD or SGD variants (e.g., momentum). Therefore, we only perform comparisons with the basic SGD method which uses the standard PyTorch initialization.
We also try both variants of the block coordinate version of Algorithm \ref{alg:stochastic_alt_fw_sd}, referred to as SFW (Stochastic Frank-Wolfe Steepest Descent without alternative in-face directions) and SFW-IF (Stochastic Frank-Wolfe Steepest Descent with alternative in-face directions) herein. (Note that, out of fairness with respect to the number of stochastic gradient calls, we allow SFW to have twice as many iterations as SFW-IF by counting each iteration of Algorithm \ref{alg:stochastic_alt_fw_sd} as two iterations in the case when $\mathrm{AlternativeDirections} = $ TRUE.)
Finally, note that all methods were run for 25 epochs using a batch size of 250 data points.

We experimented with a multilayer perceptron and a convolutional network for MNIST and a convolutional network for CIFAR-10. The convolutional networks for MNIST and CIFAR-10 were taken from PyTorch tutorials \cite{MNISTConv, CIFARConv}, while the multilayer perceptron is simply a three layer network taken from a Keras tutorial \cite{MNISTMLP}. For the multilayer perceptron MNIST example, we treat the first two layers as Frank-Wolfe layers. 
For the convolutional CIFAR-10 and MNIST examples, we treated the convolutional layers as SGD layers and the next two dense layers after the convolutional layers as Frank-Wolfe layers.
The bias terms are always incorporated into the SGD variables.
For SFW and SFW-IF, we cross validated $\delta$ separately for each layer on a grid of values $\{1, 5, 10, 50, 100\}$. Each of the examples has two Frank-Wolfe layers, and for each of these layers we report the average percent of non-zero edges going into each node (we consider any value less than 0.001 to be 0) of the solutions returned after 25 epochs by the three methods. For the same solutions returned by the three methods, we examined how the test accuracy is affected when we do hard thresholding to retain only the top $\theta\%$ of largest magnitude edges in the Frank-Wolfe layers. The results are displayed in Table \ref{tab:RealExps}, which shows that SFW-IF and SFW outperform SGD in these two metrics. All experiments show that SFW and SFW-IF are more robust to hard-thresholding than SGD. For example, when we zero out 95\% of the entries in MNIST-MLP the solution found by SFW-IF only losses 0.4\% of accuracy, while SGD loses more than 17\%. Interestingly, on the convolutional networks, SFW can be more robust than SFW-IF for very small values of the hard-thresholding parameter $\theta$.
Also, the results show that both SFW and SFW-IF find solutions in which most of the weight entries have very small values ($<0.001$), while SGD simply does not promote this behaviour. We also performed experiments on synthetically generated data which are described in detail in the supplementary materials.

\begin{table}[h!]\label{tab:RealExps}
\centering
\resizebox{\textwidth}{!}{%
\begin{tabular}{@{}ccccccccccccc@{}}
\toprule
\multicolumn{12}{c}{\bf{MNIST and CIFAR-10 Results} \medskip} \\
 & & \multicolumn{3}{c}{{\bf MNIST-MLP}} & & \multicolumn{3}{c}{{\bf MNIST-Conv}} & & \multicolumn{3}{c}{{\bf CIFAR-10}} \\
 \cline{3-5} \cline{7-9} \cline{11-13} \\
\bf{Metric} &  & \bf{SFW-IF} & \bf{SFW} & \bf{SGD} & & \bf{SFW-IF} & \bf{SFW} & \bf{SGD} & & \bf{SFW-IF} & \bf{SFW} & \bf{SGD} \\ \midrule 
Layer 1 Avg. NNZ (\%) &  & 10.05 & 7.26 & 97.28 & & 9.71 & 1.69 & 97.23 & & 29.27 & 15.12 & 98.08 \\
Layer 2 Avg. NNZ (\%) &  & 1.55 & 0.73 & 97.79 & & 27.34 & 13.08 & 98.78 & & 7.27 & 13.26 & 98.90 \\ \\ 
Accuracy (\%) w/ Top 100\% &  & 96.88 & 96.49 & 98.25 & & 98.79 & 98.50 & 99.11 & & 54.72 & 53.12 & 57.76 \\
Accuracy (\%) w/ Top 50\% &  & 96.88 & 96.49 & 98.12 & & 98.79 & 98.50 & 99.07 & & 54.71 & 53.12 & 55.69 \\
Accuracy (\%) w/ Top 25\% &  & 96.88 & 96.46 & 97.73 & & 98.82 & 98.50 & 98.63 & & 54.33 & 53.12 & 49.04 \\
Accuracy (\%) w/ Top 10\% &  & 96.73 & 96.22 & 96.80 & & 98.58 & 98.49 & 96.44 & & 44.82 & 52.57 & 37.08 \\
Accuracy (\%) w/ Top 5\% &  & 96.49 & 94.52 & 81.12 & & 90.8 & 98.06 & 84.38 & & 32.68 & 49.5 & 23.63 \\ \\  \bottomrule
\end{tabular}
}
\caption{}
\label{small-table}
\end{table}





\clearpage 

\section*{Acknowledgements}
This research is supported by NSF Awards CCF-1755705 and CMMI-1762744.

\bibliographystyle{abbrv}
\bibliography{GF-papers-orc_student_paper}

\clearpage

\appendix

{\Large \bf Supplementary Materials}

\section{Proofs in Section 2}

\subsection{Proof of Proposition 2.1}
\begin{proof}
It is easily verified that $\tilde{G}(\bar{x}, \bar{y}) \geq 0$ and hence $G(\bar{x}, \bar{y}) \geq 0$ for all $(\bar{x}, \bar{y}) \in S \times \bbR^q$. Now suppose that $G(\bar{x}, \bar{y}) > 0$. Then, either $\tilde G(\bar x, \bar y) > 0$ or $\|\nabla_y F(\bar x, \bar y)\|_{Y\ast} > 0$. In the case that $\tilde G(\bar x, \bar y) > 0$, let $\tilde{x} \in \arg\max_{x \in S}\left\{\nabla_x F(\bar x, \bar y)^T(\bar x - x)\right\}$ and define a direction $d \in \bbR^p \times \bbR^q$ by $d := (\tilde{x} - \bar{x}, 0)$. Then, $d$ is a feasible descent direction for (1) and therefore $(\bar{x}, \bar{y})$ is not locally optimal. Likewise, if $\|\nabla_y F(\bar x, \bar y)\|_{Y\ast} > 0$, let $\tilde{y} \in \arg\max\limits_{y \in \bbR^q}\{\nabla_y F(\bar x, \bar y)^T y : \|y \|_Y \le 1 \}$ and define $d := (0, -\tilde{y})$. Then $d$ is also a descent direction and therefore $(\bar{x}, \bar{y})$ is not locally optimal. 
\end{proof}

\subsection{Proof of Proposition 2.2}
\begin{proof}
By the gradient inequality for differentiable convex functions, it holds that:
\begin{align*}
F(\bar{x}, \bar{y}) - F^\ast &\leq \nabla_x F(\bar{x}, \bar{y})^T(\bar{x} - x^\ast) + \nabla_y F(\bar{x}, \bar{y})^T(\bar{y} - y^\ast) \\
&\leq \tilde{G}(\bar{x}, \bar{y}) + \|\nabla_y F(\bar{x}, \bar{y})\|_{Y\ast}\|\bar{y} - y^\ast\|_Y \\
&\leq \sqrt{\tfrac{\bar{C}}{2L_\nabla}}\cdot\tilde{G}(\bar{x}, \bar{y})\sqrt{\tfrac{\bar{2L_\nabla}}{\bar{C}}} + R\|\nabla_y F(\bar{x}, \bar{y})\|_{Y\ast} \\
&\leq \max\left\{\sqrt{\tfrac{\bar{C}}{2L_\nabla}}, R\right\} \cdot G(\bar{x}, \bar{y}) \ ,
\end{align*}
where the second inequality uses the definition of $\tilde{G}(\bar{x}, \bar{y})$ as well as H\"older's inequality.
\end{proof}

\subsection{Proof of Lemma 2.1}
\begin{proof}
Let $\sigma_S(\cdot)$ denote the support function of the set $S$, i.e., $\sigma_S(g) = \max_{x \in S}\left\{g^Tx\right\}$. Consider the function $\psi(\cdot, \cdot) : \bbR^p \times \bbR^q \to \bbR$ defined by $\psi(g, h) := (g^T\bar{x} + \sigma_S(-g))\sqrt{\frac{2L_\nabla}{\bar{C}}} ~+~ \|h\|_{Y\ast}$, which is a convex function of $(g, h)$. Note that $G(\bar{x}, \bar{y}) = \psi(\nabla_x F(\bar x, \bar y), \nabla_y F(\bar x, \bar y))$. Finally, Jensen's inequality yields:
\begin{equation*}
\bbE[\hat G] = \bbE[\psi(\hat g, \hat h)] \geq \psi(\nabla_x F(\bar x, \bar y), \nabla_y F(\bar x, \bar y)) = G(\bar{x}, \bar{y}) \ .
\end{equation*}
\end{proof}

\section{Proofs in Section 2.1}

\subsection{Useful Lemmas}\label{first_lem}

We use the following Lemma to prove the results in this section.

\begin{lemma}\label{var_lem}
Suppose that $(g_1, h_1), \ldots, (g_b, h_b)$ are i.i.d.\ random vectors in $\bbR^p \times \bbR^q$ with mean 0 and satisfying $\bbE[\|(g_i, h_i)\|_\ast^2] \leq \sigma^2$ for all $i = 1, \ldots, b$. Define $\hat g := \tfrac{1}{b}\sum_{i = 1}^b g_i$ and $\hat h = \tfrac{1}{b}\sum_{i = 1}^b h_i$.
Then, it holds that:
\begin{equation*}
\bbE[\|(\hat{g}, \hat{h})\|_\ast^2] ~\leq~ \frac{\kappa^2\sigma^2}{b} \ .
\end{equation*}
\end{lemma}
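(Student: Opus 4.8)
The plan is to reduce everything to the Euclidean norm $\|\cdot\|_2$, where the i.i.d.\ mean-zero structure produces the clean $1/b$ variance decay, and then pay a constant factor $\kappa = \kappa_1\kappa_2$ to translate back and forth between $\|\cdot\|_\ast$ and $\|\cdot\|_2$. The obstacle that forces this detour is that for a general norm there is no analogue of the Pythagorean cancellation of cross terms; that cancellation is special to the inner-product structure of $\|\cdot\|_2$, so I would first pass to $\ell_2$, do the variance computation there, and then pass back.

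First I would record the dual-norm comparisons implied by the definitions of $\kappa_1$ and $\kappa_2$. From $\|v\|_2 \le \kappa_1\|v\|$ for all $v$ one gets the set inclusion $\{v : \|v\| \le 1\} \subseteq \{v : \|v\|_2 \le \kappa_1\}$, and taking support functions (i.e.\ maximizing $s^Tv$ over each set) yields $\|s\|_\ast \le \kappa_1\|s\|_2$. Symmetrically, from $\|v\| \le \kappa_2\|v\|_2$ one gets $\{v : \|v\|_2 \le 1/\kappa_2\} \subseteq \{v : \|v\| \le 1\}$, hence $\|s\|_\ast \ge \tfrac{1}{\kappa_2}\|s\|_2$, i.e.\ $\|s\|_2 \le \kappa_2\|s\|_\ast$. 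These two inequalities, valid for every $(s,t) \in \bbR^p \times \bbR^q$, are the only place the constants $\kappa_1, \kappa_2$ enter.

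With these in hand I would chain the estimates as follows. Applying $\|s\|_\ast \le \kappa_1\|s\|_2$ to the average gives $\bbE[\|(\hat g, \hat h)\|_\ast^2] \le \kappa_1^2\,\bbE[\|(\hat g, \hat h)\|_2^2]$. Next I would expand $\bbE[\|(\hat g, \hat h)\|_2^2] = \tfrac{1}{b^2}\sum_{i,j}\bbE[(g_i,h_i)^T(g_j,h_j)]$; the off-diagonal terms vanish because the summands are independent and mean zero, leaving $\tfrac{1}{b^2}\sum_i \bbE[\|(g_i,h_i)\|_2^2] = \tfrac{1}{b}\bbE[\|(g_1,h_1)\|_2^2]$ by the i.i.d.\ assumption. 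Finally, applying $\|s\|_2 \le \kappa_2\|s\|_\ast$ to each summand and invoking the hypothesis $\bbE[\|(g_1,h_1)\|_\ast^2]\le\sigma^2$ gives $\bbE[\|(g_1,h_1)\|_2^2]\le \kappa_2^2\sigma^2$. Combining the pieces yields $\bbE[\|(\hat g, \hat h)\|_\ast^2] \le \tfrac{\kappa_1^2\kappa_2^2\sigma^2}{b} = \tfrac{\kappa^2\sigma^2}{b}$, as claimed.

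The only real subtlety, and the step I would be most careful about, is keeping the direction of the two dual-norm inequalities straight: the upper bound on $\|\cdot\|_\ast$ in terms of $\|\cdot\|_2$ must use $\kappa_1$ (coming from $\|\cdot\|_2 \le \kappa_1\|\cdot\|$) while the upper bound on $\|\cdot\|_2$ in terms of $\|\cdot\|_\ast$ must use $\kappa_2$, so that the product is exactly $\kappa^2$. Everything else is the routine Euclidean variance identity together with linearity of expectation, and no convexity or the specific block structure of the norm is needed beyond finite-dimensional norm equivalence, which guarantees $\kappa_1,\kappa_2 < +\infty$.
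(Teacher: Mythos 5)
Your proof is correct and follows essentially the same route as the paper's: bound $\|\cdot\|_\ast$ by $\kappa_1\|\cdot\|_2$, exploit the i.i.d.\ mean-zero structure to get the $1/b$ factor in the Euclidean norm, then bound $\|\cdot\|_2$ by $\kappa_2\|\cdot\|_\ast$ to invoke the variance hypothesis. The only (welcome) addition is your explicit support-function justification of the dual-norm comparisons, which the paper states without proof.
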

\begin{proof}
Recall that $\kappa_1 := \max_{(x,y) \ne 0} \|(x,y)\|_2/\|(x,y)\| = \max_{(s,t) \ne 0} \|(s,t)\|_\ast/\|(s,t)\|_2$ as well as $\kappa_2 := \max_{(x,y) \ne 0} \|(x,y)\|/\|(x,y)\|_2 = \max_{(s,t) \ne 0} \|(s,t)\|_2/\|(s,t)\|_\ast$. Hence, for any $(s,t) \in \bbR^p \times \bbR^q$, it holds that:
\begin{equation*}
\|(s,t)\|_\ast \leq \kappa_1\|(s,t)\|_2 \leq \kappa_1\kappa_2\|(s,t)\|_\ast = \kappa\|(s,t)\|_\ast \ .
\end{equation*}
Now we have that:
\begin{equation*}
\bbE[\|(\hat{g}, \hat{h})\|_\ast^2] \leq \kappa_1^2\cdot\bbE[\|(\hat{g}, \hat{h})\|_2^2] = \frac{\kappa_1^2}{b}\cdot\bbE[\|(g_1, h_1)\|_2^2] \leq \frac{\kappa^2}{b}\cdot\bbE[\|(g_1, h_1)\|_\ast^2] \leq \frac{\kappa^2\sigma^2}{b} \ ,
\end{equation*}
where the equality in the above chain uses the fact that $(g_1, h_1), \ldots, (g_b, h_b)$ are i.i.d.\ with mean 0.
\end{proof}

The proof of Theorem 2.1 is based on the following key lemma that bounds the expected progress per iteration.

\begin{lemma}\label{fw-sd-lemma}
For each $k \geq 0$, let $\calF_k$ denote the $\sigma$-field of all information gathered after completing iteration $k - 1$ of Algorithm 1, i.e., right before starting iteration $k$, and define $\Delta_k := 8L_\nabla(F(x_k, y_k) - F(x_{k+1}, y_{k+1}))$. Then, at every iteration $k \geq 0$, it holds that:
\begin{equation*}
\bbE[\Delta_k ~|~ \calF_k] ~\geq~ G(x_k, y_k)^2 - \frac{4\alpha_{\mathrm{AD}}\kappa^2\sigma^2}{b_k} \ .
\end{equation*}
\end{lemma}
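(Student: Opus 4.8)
The plan is to bound the one-step progress $F(x_k,y_k) - F(x_{k+1},y_{k+1})$ pathwise using the descent (quadratic upper bound) inequality implied by smoothness (A1), and then take conditional expectations, invoking the variance bound (Lemma \ref{var_lem}) to control the stochastic error and Jensen's inequality together with Lemma \ref{lem:jensen_gap} to recover the squared true gap $G(x_k,y_k)^2$. Throughout I write $e_k^x := \hat g_k - \nabla_x F(x_k,y_k)$ and $e_k^y := \hat h_k - \nabla_y F(x_k,y_k)$ for the conditionally mean-zero gradient errors, and I abbreviate $a := \tilde G_k\sqrt{2L_\nabla/\bar C}$ and $b := \|\hat h_k\|_{Y\ast}$, so that $\hat G_k = a + b$ in the notation of Lemma \ref{lem:jensen_gap}.

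First I would analyze the combined Frank-Wolfe and steepest-descent step, which moves $(x_k,y_k)$ to $(\bar x_k, y_{k+1})$. Applying the descent inequality and using that the squared norm separates across the $x$ and $y$ blocks, I split the progress into an $x$-part $P_x$ and a $y$-part $P_y$. For $P_x$, substituting $\bar\alpha_k = \tilde G_k/\bar C$, writing $\nabla_x F^T(x_k - \tilde x_k) = \tilde G_k - (e_k^x)^T(x_k - \tilde x_k)$, and using $\|\tilde x_k - x_k\|_X \le \diam(S)$ together with $\bar C \ge 2L_\nabla\diam(S)^2$ (from (A4)) to absorb the quadratic term, I am left, after multiplying by $8L_\nabla$, with a main term $3a^2$ minus a linear error term $\mathrm{Error}_x := \tfrac{8L_\nabla\tilde G_k}{\bar C}(e_k^x)^T(x_k-\tilde x_k)$. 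The $y$-part is handled identically using $\alpha_k = \|\hat h_k\|_{Y\ast}/2L_\nabla$, $\|\tilde y_k\|_Y \le 1$, and $\hat h_k^T\tilde y_k = \|\hat h_k\|_{Y\ast}$, yielding a main term $3b^2$ minus $\mathrm{Error}_y := 4b\,(e_k^y)^T\tilde y_k$. Collecting terms gives the pathwise bound $8L_\nabla(F(x_k,y_k) - F(\bar x_k, y_{k+1})) \ge 3a^2 + 3b^2 - \mathrm{Error}_x - \mathrm{Error}_y$.

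The crux is then to turn $3a^2 + 3b^2$ into $\hat G_k^2 = (a+b)^2$ while absorbing the error terms. Since $3a^2 + 3b^2 = (a+b)^2 + 2(a^2 - ab + b^2)$ and $a^2 - ab + b^2 \ge \tfrac12(a^2+b^2)$, there is a slack of at least $a^2 + b^2$ beyond $\hat G_k^2$. Using $\bar C \ge 2L_\nabla\diam(S)^2$ to bound the error coefficients gives $|\mathrm{Error}_x| \le 4a\|e_k^x\|_{X\ast}$ and $|\mathrm{Error}_y| \le 4b\|e_k^y\|_{Y\ast}$, and Young's inequality ($4uv \le u^2 + 4v^2$) absorbs each against the slack at the cost of $4\|e_k^x\|_{X\ast}^2$ and $4\|e_k^y\|_{Y\ast}^2$. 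This yields $8L_\nabla(F(x_k,y_k) - F(\bar x_k, y_{k+1})) \ge \hat G_k^2 - 4\|(e_k^x, e_k^y)\|_\ast^2$. Taking $\bbE[\,\cdot \mid \calF_k]$, Lemma \ref{var_lem} gives $\bbE[\|(e_k^x,e_k^y)\|_\ast^2\mid\calF_k] \le \kappa^2\sigma^2/b_k$, while Jensen's inequality and Lemma \ref{lem:jensen_gap} give $\bbE[\hat G_k^2\mid\calF_k] \ge (\bbE[\hat G_k\mid\calF_k])^2 \ge G(x_k,y_k)^2$; this establishes the claim with $\alpha_{\mathrm{AD}} = 1$.

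Finally, for the alternative-direction version I would show the extra step from $(\bar x_k, y_{k+1})$ to $(x_{k+1}, y_{k+1})$ does no harm up to one more variance term. Applying the descent inequality to this $x$-only move, using $\bar\beta_k \le A_k/\bar C$, $A_k = -\check g_k^T d_k > 0$, and $\|d_k\|_X \le \diam(S)$, the quadratic term is again controlled by $\bar C \ge 2L_\nabla\diam(S)^2$, leaving a nonnegative main term of at least $\tfrac34\bar{C}\bar\beta_k^2$ minus the linear error $\bar\beta_k(\check e_k)^T d_k$, where $\check e_k := \check g_k - \nabla_x F(\bar x_k, y_{k+1})$. Young's inequality absorbs this error into the main term at the cost of a multiple of $\|\check e_k\|_{X\ast}^2$, so that (by the tower property and Lemma \ref{var_lem} applied to the fresh batch $\check z_{k,1},\ldots,\check z_{k,b_k}$) the conditional expectation of this step's progress is at least $-4\kappa^2\sigma^2/b_k$; adding this to the previous bound produces the factor $\alpha_{\mathrm{AD}} = 2$. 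The main obstacle throughout is that the error terms are correlated with the step directions and step-sizes, since both depend on the same stochastic gradients, so they cannot simply vanish in expectation; the resolution is to keep them pathwise and absorb them via Young's inequality into the slack manufactured by the conservative step-sizes, which is exactly what the assumption $\bar C \ge 2L_\nabla\diam(S)^2$ supplies.
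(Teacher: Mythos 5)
Your proposal is correct and follows essentially the same route as the paper's proof: the smoothness (descent) inequality with the dynamic step-sizes, absorption of the stochastic error terms via Young's inequality against the slack created by $\bar{C} \geq 2L_\nabla \diam(S)^2$, the same decomposition and tower-property argument for the alternative-direction step, and the same finish via Lemma \ref{var_lem} and Lemma \ref{lem:jensen_gap} with Jensen's inequality, arriving at the identical pathwise inequality $8L_\nabla\bigl(F(x_k,y_k)-F(\bar x_k,y_{k+1})\bigr) \geq \hat G_k^2 - 4\|(\hat g_k - \nabla_x F(x_k,y_k),\, \hat h_k - \nabla_y F(x_k,y_k))\|_\ast^2$. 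The only differences are cosmetic orderings of the algebra: you carry the linear error terms and absorb them later against explicit slack (and use $\bar\beta_k \leq A_k/\bar{C}$ where the paper invokes the minimizing property of $\bar\beta_k$), whereas the paper applies the Young-type bound $s^Tx \leq \tfrac{1}{2L_\nabla}\|s\|_{X\ast}^2 + \tfrac{L_\nabla}{2}\|x\|_X^2$ immediately.
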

{\em Proof.}
\paragraph{Case 1: AlternativeDirections = FALSE.}
Let us first consider the case of not using alternative directions, i.e., $\mathrm{AlternativeDirections} = $ FALSE.
By Assumption (A1), it is well-known and follows easily from the fundamental theorem of calculus that:
\begin{equation}\label{standard_smoothness}
F(x, y) \leq F(\bar{x}, \bar{y}) + \nabla F(\bar{x}, \bar{y})^T((x,y) - (\bar{x}, \bar{y})) + \tfrac{L_\nabla}{2}\|(x,y) - (\bar{x}, \bar{y})\|^2 \ \text{ for all } (x,y), (\bar{x}, \bar{y}) \in S \times \bbR^q \ .
\end{equation}
In the case of $\mathrm{AlternativeDirections} = $ FALSE, we have that $x_{k+1} = \bar{x}_{k} = x_k + \bar{\alpha}_k(\tilde{x}_k - x_k)$. 
Applying the above inequality to the iterates of Algorithm 1 yields deterministically: 
\begin{align*}
F(x_{k+1}, y_{k+1}) &\leq F(x_k, y_k) + \nabla F(x_k, y_k)^T((x_{k+1}, y_{k+1}) - (x_k, y_k)) + \tfrac{L_\nabla}{2}\|(x_{k+1}, y_{k+1}) - (x_k, y_k)\|^2 \\
&= F(x_k, y_k) + \nabla_x F(x_k, y_k)^T(x_{k+1} - x_k) + \tfrac{L_\nabla}{2}\|x_{k+1} - x_k\|_X^2 \\
& \ \ \ \ \ \ \ \ \ \ \ \ \ \ \ \ + \nabla_y F(x_k, y_k)^T(y_{k+1} - y_k) + \tfrac{L_\nabla}{2}\|y_{k+1} - y_k\|_Y^2 \\
&= F(x_k, y_k) + \bar{\alpha}_k\nabla_x F(x_k, y_k)^T(\tilde{x}_k - x_k) + \tfrac{L_\nabla\bar{\alpha}_k^2}{2}\|\tilde{x}_k - x_k\|_X^2 \\
& \ \ \ \ \ \ \ \ \ \ \ \ \ \ \ \ -\alpha_k\nabla_y F(x_k, y_k)^T\tilde{y}_k + \tfrac{L_\nabla\alpha_k^2}{2}\|\tilde{y}_k\|_Y^2 \\
&\leq F(x_k, y_k) + \bar{\alpha}_k\nabla_x F(x_k, y_k)^T(\tilde{x}_k - x_k) + \tfrac{L_{\nabla}\text{diam}(S)^2\bar{\alpha}_k^2}{2} \\
& \ \ \ \ \ \ \ \ \ \ \ \ \ \ \ \ -\alpha_k\nabla_y F(x_k, y_k)^T\tilde{y}_k + \tfrac{L_\nabla\alpha_k^2}{2} \\
&= F(x_k, y_k) + \bar{\alpha}_k\hat{g}_k^T(\tilde{x}_k - x_k) + \bar{\alpha}_k(\nabla_x F(x_k, y_k) - \hat{g}_k)^T(\tilde{x}_k - x_k) + \tfrac{L_{\nabla}\text{diam}(S)^2\bar{\alpha}_k^2}{2} \\
& \ \ \ \ \ \ \ \ \ \ \ \ \ \ \ \ -\alpha_k\hat{h}_k^T\tilde{y}_k + \alpha_k(\hat{h}_k - \nabla_y F(x_k, y_k))^T\tilde{y}_k + \tfrac{L_\nabla\alpha_k^2}{2} \\
&= F(x_k, y_k) - \bar{\alpha}_k\tilde{G}_k + \bar{\alpha}_k(\nabla_x F(x_k, y_k) - \hat{g}_k)^T(\tilde{x}_k - x_k) + \tfrac{L_{\nabla}\text{diam}(S)^2\bar{\alpha}_k^2}{2} \\
& \ \ \ \ \ \ \ \ \ \ \ \ \ \ \ \ -\alpha_k\|\hat{h}_k\|_{Y\ast} + \alpha_k(\hat{h}_k - \nabla_y F(x_k, y_k))^T\tilde{y}_k + \tfrac{L_\nabla\alpha_k^2}{2} \ .
\end{align*}
Recall that for any $\gamma > 0$ and vectors $s, x \in \bbR^p$, it holds that $s^Tx \leq \tfrac{1}{2\gamma}\|s\|_{X\ast}^2 + \tfrac{\gamma}{2}\|x\|_X^2$. Applying this inequality with $\gamma \gets L_\nabla$, $s \gets \nabla_x F(x_k, y_k) - \hat{g}_k$ and $x \gets \bar{\alpha}_k(\tilde{x}_k - x_k)$ yields:
\begin{align*}
F(x_{k+1}, y_{k+1}) &\leq F(x_k, y_k) - \bar{\alpha}_k\tilde{G}_k + \tfrac{1}{2L_\nabla}\|\nabla_x F(x_k, y_k) - \hat{g}_k\|_{X\ast}^2 + \tfrac{L_\nabla\bar{\alpha}_k^2}{2}\|\tilde{x}_k - x_k\|_X^2 + \tfrac{L_{\nabla}\text{diam}(S)^2\bar{\alpha}_k^2}{2} \\
& \ \ \ \ \ \ \ \ \ \ \ \ \ \ \ \ -\alpha_k\|\hat{h}_k\|_{Y\ast} + \alpha_k(\hat{h}_k - \nabla_y F(x_k, y_k))^T\tilde{y}_k + \tfrac{L_\nabla\alpha_k^2}{2} \\
&\leq F(x_k, y_k) - \bar{\alpha}_k\tilde{G}_k + \tfrac{1}{2L_\nabla}\|\nabla_x F(x_k, y_k) - \hat{g}_k\|_{X\ast}^2 + \tfrac{\bar{C}\bar{\alpha}_k^2}{2} \\
& \ \ \ \ \ \ \ \ \ \ \ \ \ \ \ \ -\alpha_k\|\hat{h}_k\|_{Y\ast} + \alpha_k(\hat{h}_k - \nabla_y F(x_k, y_k))^T\tilde{y}_k + \tfrac{L_\nabla\alpha_k^2}{2} \ ,
\end{align*}
where the second inequality uses $\bar{C} \geq 2L_\nabla\cdot\diam(S)^2$. Applying the same reasoning on the space of $y$ variables with norms $\|\cdot\|_Y$ and $\|\cdot\|_{Y\ast}$ yields:
\begin{align*}
F(x_{k+1}, y_{k+1}) &\leq F(x_k, y_k) - \bar{\alpha}_k\tilde{G}_k + \tfrac{1}{2L_\nabla}\|\nabla_x F(x_k, y_k) - \hat{g}_k\|_{X\ast}^2 + \tfrac{\bar{C}\bar{\alpha}_k^2}{2} \\
& \ \ \ \ \ \ \ \ \ \ \ \ \ \ \ \ -\alpha_k\|\hat{h}_k\|_{Y\ast} + \tfrac{1}{2L_\nabla}\|\nabla_y F(x_k, y_k) -\hat{h}_k\|_{Y\ast}^2 + \tfrac{L_\nabla\alpha_k^2}{2}\|\tilde{y}_k\|_Y^2 + \tfrac{L_\nabla\alpha_k^2}{2} \\
F(x_{k+1}, y_{k+1}) &\leq F(x_k, y_k) - \bar{\alpha}_k\tilde{G}_k + \tfrac{1}{2L_\nabla}\|\nabla_x F(x_k, y_k) - \hat{g}_k\|_{X\ast}^2 + \tfrac{\bar{C}\bar{\alpha}_k^2}{2} \\
& \ \ \ \ \ \ \ \ \ \ \ \ \ \ \ \ -\alpha_k\|\hat{h}_k\|_{Y\ast} + \tfrac{1}{2L_\nabla}\|\nabla_y F(x_k, y_k) -\hat{h}_k\|_{Y\ast}^2 + L_\nabla\alpha_k^2 \ ,
\end{align*}
where the second inequality uses $\|\tilde{y}_k\|_Y \leq 1$.
Using $\bar\alpha_k = \tilde{G}_k/\bar{C}$, and $\alpha_k = \|\hat{h}_k\|_{Y\ast}/2L_{\nabla}$ yields:
\begin{align*}
F(x_{k+1}, y_{k+1}) &\leq F(x_k, y_k) - \frac{\tilde{G}_k^2}{2\bar{C}} - \frac{\|\hat{h}_k\|_{Y\ast}^2}{4L_{\nabla}} + \frac{1}{2L_\nabla}\|\nabla_x F(x_k, y_k) - \hat{g}_k\|_{X\ast}^2 + \frac{1}{2L_\nabla}\|\nabla_y F(x_k, y_k) -\hat{h}_k\|_{Y\ast}^2
\end{align*}
Multiplying the above inequality by $8L_{\nabla}$ and rearranging terms yields:
\begin{align*}
\Delta_k &\geq \frac{4L_\nabla\tilde{G}_k^2}{\bar{C}} + 2\|\hat{h}_k\|_{Y\ast}^2 - 4\|\nabla_x F(x_k, y_k) - \hat{g}_k\|_{X\ast}^2 - 4\|\nabla_y F(x_k, y_k) -\hat{h}_k\|_{Y\ast}^2 \\
&= \frac{4L_\nabla\tilde{G}_k^2}{\bar{C}} + 2\|\hat{h}_k\|_{Y\ast}^2 -4\|(\nabla_x F(x_k, y_k), \nabla_y F(x_k, y_k)) - (\hat{g}_k, \hat{h}_k)\|_\ast^2 \\
&\geq \left(\tilde{G}_k\sqrt{\frac{2L_\nabla}{\bar{C}}} + \|\hat{h}\|_{Y\ast}\right)^2 - 4\|(\nabla_x F(x_k, y_k), \nabla_y F(x_k, y_k)) - (\hat{g}_k, \hat{h}_k)\|_\ast^2 \ ,
\end{align*}
where the second inequality uses $(a + b)^2 \leq 2(a^2 + b^2)$. By combining assumption (A3) with Lemma \ref{var_lem}, we have that
\begin{equation*}
\bbE\left[\|(\nabla_x F(x_k, y_k), \nabla_y F(x_k, y_k)) - (\hat{g}_k, \hat{h}_k)\|_\ast^2 ~|~ \calF_k\right] \leq \frac{\kappa^2\sigma^2}{b_k} \ .
\end{equation*}
Furthermore, by combining Lemma 2.1 with Jensen's inequality on $t \mapsto t^2$ we have:
\begin{equation*}
G(x_k, y_k)^2 \leq \left(\bbE\left[\tilde{G}_k\sqrt{\tfrac{2L_\nabla}{\bar{C}}} + \|\hat{h}\|_{Y\ast} ~|~ \calF_k\right]\right)^2 \leq \bbE\left[\left(\tilde{G}_k\sqrt{\tfrac{2L_\nabla}{\bar{C}}} + \|\hat{h}\|_{Y\ast}\right)^2 ~|~ \calF_k\right] \ .
\end{equation*}
Combining the previous inequalities together yields:
\begin{equation*}
\bbE[\Delta_k ~|~ \calF_k] ~\geq~ G(x_k, y_k)^2 - \frac{4\kappa^2\sigma^2}{b_k} \ ,
\end{equation*} 
which proves the result for Case 1.

\paragraph{Case 2: AlternativeDirections = TRUE.}
First notice that we can decompose $\Delta_k$ as:
\begin{equation}\label{decompose}
\Delta_k ~=~ 8L_{\nabla}(F(x_{k}, y_{k}) - F(\bar{x}_k, y_{k+1})) ~+~ 8L_{\nabla}(F(\bar{x}_k, y_{k+1}) - F(x_{k+1}, y_{k+1})) \ .
\end{equation}
By the exact same reasoning as above, we have that
\begin{equation}\label{old_fw_bound}
\bbE[8L_{\nabla}(F(x_{k}, y_{k}) - F(\bar{x}_k, y_{k+1})) ~|~ \calF_k] ~\geq~ G(x_k, y_k)^2 - \frac{4\kappa^2\sigma^2}{b_k} \ .
\end{equation}

Let $\calG_k$ denote the $\sigma$-field of all information gathered after completing Step (3.) of iteration $k$ of Algorithm 2, i.e., right before starting Step (4.) (the alternative direction step). Note that $\calF_k \subset \calG_k$. Applying \eqref{standard_smoothness} at Step (4.) of Algorithm 2, we have deterministically:
\begin{align*}
F(x_{k+1}, y_{k+1}) &\leq F(\bar{x}_k, y_{k+1}) + \nabla F(\bar{x}_k, y_{k+1})^T((x_{k+1}, y_{k+1}) - (\bar{x}_k, y_{k+1})) + \tfrac{L_\nabla}{2}\|(x_{k+1}, y_{k+1}) - (\bar{x}_k, y_{k+1})\|^2 \\
&= F(\bar{x}_k, y_{k+1}) + \nabla_x F(\bar{x}_k, y_{k+1})^T(x_{k+1} - \bar{x}_k) + \tfrac{L_\nabla}{2}\|x_{k+1} - \bar{x}_k\|_X^2 \\
&= F(\bar{x}_k, y_{k+1}) + \bar{\beta}_k\nabla_x F(\bar{x}_k, y_{k+1})^Td_k + \tfrac{L_\nabla\bar{\beta}_k^2}{2}\|d_k\|_X^2 \\
&\leq F(\bar{x}_k, y_{k+1}) + \bar{\beta}_k\nabla_x F(\bar{x}_k, y_{k+1})^Td_k + \tfrac{L_\nabla\diam(S)^2\bar{\beta}_k^2}{2} \\
&= F(\bar{x}_k, y_{k+1}) + \bar{\beta}_k\check{g}_k^Td_k + \bar{\beta}_k(\nabla_x F(\bar{x}_k, y_{k+1}) - \check{g}_k)^Td_k + \tfrac{L_\nabla\diam(S)^2\bar{\beta}_k^2}{2} \\
&= F(\bar{x}_k, y_{k+1}) - \bar{\beta}_kA_k + \bar{\beta}_k(\nabla_x F(\bar{x}_k, y_{k+1}) - \check{g}_k)^Td_k + \tfrac{L_\nabla\diam(S)^2\bar{\beta}_k^2}{2}
\end{align*}
Applying the inequality $s^Tx \leq \tfrac{1}{2\gamma}\|s\|_{X\ast}^2 + \tfrac{\gamma}{2}\|x\|_X^2$ with $\gamma \gets L_\nabla$, $s \gets \nabla_x F(\bar{x}_k, y_{k+1}) - \check{g}_k$ and $x \gets \bar{\beta}_kd_k$ yields:
\begin{align*}
F(x_{k+1}, y_{k+1}) &\leq F(\bar{x}_k, y_{k+1}) - \bar{\beta}_kA_k + \tfrac{1}{2L_{\nabla}}\|\nabla_x F(\bar{x}_k, y_{k+1}) - \check{g}_k\|_{X\ast}^2 + \tfrac{L_{\nabla}\bar{\beta}_k^2}{2}\|d_k\|_X^2 + \tfrac{L_\nabla\diam(S)^2\bar{\beta}_k^2}{2} \\
&\leq F(\bar{x}_k, y_{k+1}) - \bar{\beta}_kA_k + \tfrac{1}{2L_{\nabla}}\|\nabla_x F(\bar{x}_k, y_{k+1}) - \check{g}_k\|_{X\ast}^2 + \tfrac{\bar{C}\bar{\beta}_k^2}{2} \ ,
\end{align*}
where the second inequality uses $\bar{C} \geq 2L_\nabla\cdot\diam(S)^2$. Notice that $\bar{\beta}_k = \min\left\{A_k/\bar{C}, \alpha^{\text{stop}}_k\right\}$ minimizes the quadratic function $\beta \mapsto -\beta A_k + \tfrac{\bar{C}\beta^2}{2}$ on the interval $[0, \alpha^{\text{stop}}_k]$. Hence, in particular we have that $- \bar{\beta}_kA_k + \tfrac{\bar{C}\bar{\beta}_k^2}{2} \leq 0$ and therefore:
\begin{equation}\label{therefore_zzz}
F(x_{k+1}, y_{k+1}) ~\leq~ F(\bar{x}_k, y_{k+1}) + \tfrac{1}{2L_{\nabla}}\|\nabla_x F(\bar{x}_k, y_{k+1}) - \check{g}_k\|_{X\ast}^2 \ .
\end{equation}
Multiplying the above inequality by $8L_{\nabla}$ and rearranging terms yields:
\begin{align*}
8L_{\nabla}(F(\bar{x}_k, y_{k+1}) - F(x_{k+1}, y_{k+1})) ~&\geq~ -4\|\nabla_x F(\bar{x}_k, y_{k+1}) - \check{g}_k\|_{X\ast}^2 \\
~&=~ -4\|(\nabla_x F(\bar{x}_k, y_{k+1}), \nabla_y F(\bar{x}_k, y_{k+1})) - (\check{g}_k, \nabla_y F(\bar{x}_k, y_{k+1}))\|_{\ast}^2 \ .
\end{align*}
Using the definition of the dual norm $\|\cdot\|_\ast$ as well as assumption (A3), we have for all $(x,y) \in S \times \bbR^q$ that:
\begin{align*}
&\bbE_{z \sim D}\left[\|(\nabla_x f(x, y, z), \nabla_y F(x,y)) - (\nabla_x F(x,y), \nabla_y F(x,y))\|_\ast^2\right] = \\
&\bbE_{z \sim D}\left[\|(\nabla_x f(x, y, z) - \nabla_x F(x,y)\|_{X\ast}^2 + \|0\|_{Y\ast}^2\right] \leq \\
&\bbE_{z \sim D}\left[\|(\nabla_x f(x, y, z) - \nabla_x F(x,y)\|_{X\ast}^2 + \|\nabla_y f(x, y, z) - \nabla_y F(x,y) \|_{Y\ast}^2\right] = \\
&\bbE_{z \sim D}\left[\|\nabla f(x, y, z) - \nabla F(x, y)\|_\ast^2\right] \leq \sigma^2
\end{align*}
Hence, by combining the above with Lemma \ref{var_lem}, we have that
\begin{equation*}
\bbE\left[\|(\nabla_x F(\bar{x}_k, y_{k+1}), \nabla_y F(\bar{x}_k, y_{k+1})) - (\check{g}_k, \nabla_y F(\bar{x}_k, y_{k+1}))\|_{\ast}^2 ~|~ \calG_k\right] \leq \frac{\kappa^2\sigma^2}{b_k} \ .
\end{equation*}
Combining the previous inequalities together yields:
\begin{equation*}
\bbE[8L_{\nabla}(F(\bar{x}_k, y_{k+1}) - F(x_{k+1}, y_{k+1})) ~|~ \calG_k] ~\geq~ - \frac{4\kappa^2\sigma^2}{b_k} \ .
\end{equation*}
Using the tower property of conditional expectation we have that
\begin{equation*}
\bbE[8L_{\nabla}(F(\bar{x}_k, y_{k+1}) - F(x_{k+1}, y_{k+1})) ~|~ \calF_k] ~=~ \bbE\left[\bbE[8L_{\nabla}(F(\bar{x}_k, y_{k+1}) - F(x_{k+1}, y_{k+1})) ~|~ \calG_k] ~|~ \calF_k\right] ~\geq~ -\frac{4\kappa^2\sigma^2}{b_k} \ .
\end{equation*}
Finally combining the above with \eqref{old_fw_bound} and and \eqref{decompose} yields:
\begin{equation*}
\bbE[\Delta_k ~|~ \calF_k] ~\geq~ G(x_k, y_k)^2 - \frac{8\kappa^2\sigma^2}{b_k} \ ,
\end{equation*}
which proves the result in Case 2.
\qed

\subsection{Proof of Theorem 2.1}\label{big_theorem}
By combining Lemma \ref{fw-sd-lemma} with the law of iterated expectations, it holds for each $k \in \{0, \ldots, K\}$ that:
\begin{equation*}
\bbE[\Delta_k] ~=~ \bbE\left[\bbE[\Delta_k ~|~ \calF_k]\right] ~\geq~ \bbE[G(x_k, y_k)^2] - \frac{4\alpha_{\mathrm{AD}}\kappa^2\sigma^2}{b_k} \ .
\end{equation*}
Recalling that $\bbE[\Delta_k] = 8L_{\nabla}\bbE[F(x_k, y_k)] - 8L_{\nabla}\bbE[F(x_{k+1}, y_{k+1})]$ and summing the above inequality over all $k \in \{0, \ldots, K\}$ yields:
\begin{equation*}
\sum_{k = 0}^K\bbE[G(x_k, y_k)^2] ~\leq~ 8L_{\nabla}(F(x_0, y_0) - \bbE[F(x_{K+1}, y_{K+1})]) ~+~ 4\alpha_{\mathrm{AD}}\kappa^2\sigma^2\sum_{k = 0}^K \frac{1}{b_k} \ .
\end{equation*}
Then, using $F^\ast \leq \bbE[F(x_{K+1}, y_{K+1})]$ and dividing by $K+1$ yields:
\begin{equation*}
\frac{1}{K+1}\sum_{k = 0}^K\bbE[G(x_k, y_k)^2] ~\leq~ \frac{8L_{\nabla}(F(x_0, y_0) - F^\ast)}{K+1} ~+~ \frac{4\alpha_{\mathrm{AD}}\kappa^2\sigma^2}{K+1}\sum_{k = 0}^K \frac{1}{b_k} \ .
\end{equation*}
Finally, since $(\hat x_k, \hat y_k)$ is chosen uniformly at random from $(x_0, y_0), \ldots, (x_{K}, y_K)$, another iterated expectations argument implies that $\bbE[G(\hat{x}_K, \hat{y}_K)^2] = \frac{1}{K+1}\sum_{k = 0}^K\bbE[G(x_k, y_k)^2]$, from which the desired result follows.\qed 

\section{Example of In-Face Direction Computation}
In this section, we briefly describe how to compute an in-face direction in the case where $S = \{x : \|x\|_1 \leq \delta\}$ is an $\ell_1$-ball.
Let $\bar{x} \in S$ be a given point representing our current iterate. In particular, let us discuss the complexity of a solving a linear optimization problem $\min_{x \in {\cal F}(\bar{x})}c^Tx$ over the minimal face ${\cal F}(\bar{x})$ containing $\bar{x}$ for some given $c \in \bbR^p$, which is required in the ``away step" direction (3), for example. 

Let us consider two cases:  {\em (i)} $\bar{x} \in \text{int}(S)$ and {\em (ii)} $\bar{x} \in \partial S$, where $\partial S$ represents the boundary of $S$. In case {\em (i)}, we simply have that ${\cal F}(\bar{x}) = S$ and the linear optimization problem is simply that of minimizing $c^Tx$ over $S$, which is the same subproblem as the Frank-Wolfe step as is equivalent to computing $\|c\|_\infty = \max_{j = 1, \ldots, p} |c_j|$. Otherwise, if $\bar{x} \in \partial S$, then we have that $\|\bar{x}\|_1 = \delta$ and let $J_+(\bar{x}) = \{j : \bar{x}_j > 0\}$, $J_-(\bar{x}) = \{j : \bar{x}_j < 0\}$, $J_0(\bar{x}) = \{j : \bar{x}_j = 0\}$. Then, it is straightforward to see that
\begin{align*}
{\cal F}(\bar{x}) = \{x :~ &x_j > 0 \text{ if } j \in J_+(\bar{x}), \\
&x_j < 0 \text{ if } j \in J_-(\bar{x}), \\
&x_j = 0 \text{ if } j \in J_0(\bar{x}), \\
&\sum_{j \in J_+(\bar{x})} x_j - \sum_{j \in J_-(\bar{x})} x_j = \delta\} \ .
\end{align*}
(Note that we clearly have $\|x\|_1 = \sum_{j \in J_+(\bar{x})} x_j - \sum_{j \in J_-(\bar{x})} x_j$ in the above.) Then, in order to solve $\min_{x \in {\cal F}(\bar{x})}c^Tx$, we can simply follow an argument that enumerates the extreme points of the above polytope, from which we obtain that:
\begin{equation*}
j^\ast \in \argmin_{j \in J_+(\bar{x}) \cup J_-(\bar{x})} \sgn(\bar{x}_j)c_j \ \Longrightarrow \ \sgn(\bar{x}_{j^\ast})\delta e_{j^\ast} \in \arg\min_{x \in {\cal F}(\bar{x})}c^Tx \ .
\end{equation*}
Thus, as in the case when $\bar{x} \in \text{int}(S)$, we can solve $\min_{x \in {\cal F}(\bar{x})}c^Tx$ efficiently in time that is linear in $p$.

\section{Proofs in Section 3}

Let us first state and prove the following lemma, which is the ``block coordinate'' version of Lemma \ref{fw-sd-lemma} and will be critical proving Theorem 3.1.

\begin{lemma}\label{block-alt-fw-sd-lemma}
For each $k \geq 0$, let $\calF_k$ denote the $\sigma$-field of all information gathered after completing iteration $k - 1$ of the Block Coordinate variant of Algorithm 1, i.e., right before starting iteration $k$, and define $\Delta_k := 8L_\nabla(F(x_k, y_k) - F(x_{k+1}, y_{k+1}))$. Then, at every iteration $k \geq 0$, it holds that:
\begin{equation*}
\bbE[\Delta_k ~|~ \calF_k] ~\geq~ G(x_k, y_k)^2 - \frac{4\alpha_{\mathrm{AD}}\kappa^2\sigma^2}{b_k} \ .
\end{equation*}
where the modified Frank-Wolfe gap $G(\cdot, \cdot)$ (Definition 2.1) is defined using $\bar{C} := \sum_{i = 1}^N \bar{C}_i$.
\end{lemma}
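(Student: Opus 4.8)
The plan is to mirror the proof of Lemma \ref{fw-sd-lemma}, exploiting at every step the separable structure of the $x$-variables while leaving the steepest descent analysis in the $y$-variables completely untouched (the block structure is imposed only on $x$). Consider first the case $\mathrm{AlternativeDirections} = $ FALSE. Starting from the smoothness descent inequality \eqref{standard_smoothness}, I note that $\|x_{k+1} - x_k\|_X^2 = \sum_{i=1}^N \|x_{k+1}^{(i)} - x_k^{(i)}\|_{X,i}^2$ and that the block updates $x_{k+1}^{(i)} = x_k^{(i)} + \bar{\alpha}_k^i(\tilde{x}_k^{(i)} - x_k^{(i)})$ are independent across blocks, so the first-order and quadratic $x$-terms split into a sum over $i$. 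For each block I would replicate the single-block manipulations: bound $\|\tilde{x}_k^{(i)} - x_k^{(i)}\|_{X,i}^2 \leq \diam(S_i)^2$, use $\bar{C}_i \geq 2L_\nabla \diam(S_i)^2$, introduce the stochastic gradient $\hat{g}_k^{(i)}$ together with its error term, and apply $s^T x \leq \tfrac{1}{2\gamma}\|s\|_{X\ast,i}^2 + \tfrac{\gamma}{2}\|x\|_{X,i}^2$ with $\gamma = L_\nabla$ blockwise. After substituting the block step-sizes $\bar{\alpha}_k^i = \tilde{G}_k^i/\bar{C}_i$, the $x$-contribution to the decrease becomes $\sum_i \tfrac{(\tilde{G}_k^i)^2}{2\bar{C}_i}$, while the accumulated block error terms recombine, via $\|\cdot\|_{X\ast}^2 = \sum_i \|\cdot\|_{X\ast,i}^2$, into the single dual-norm error $\tfrac{1}{2L_\nabla}\|\nabla_x F - \hat{g}_k\|_{X\ast}^2$.

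The crux, and the one genuinely new step, is to convert the per-block gain $\sum_i \tfrac{(\tilde{G}_k^i)^2}{2\bar{C}_i}$ into the aggregate gain $\tfrac{\tilde{G}_k^2}{2\bar{C}}$ demanded by the gap (Definition \ref{def:gap}) with $\bar{C} = \sum_i \bar{C}_i$ and $\tilde{G}_k = \sum_i \tilde{G}_k^i$. By Cauchy--Schwarz,
\[
\tilde{G}_k^2 = \Big(\sum_{i=1}^N \tfrac{\tilde{G}_k^i}{\sqrt{\bar{C}_i}}\cdot\sqrt{\bar{C}_i}\Big)^2 \leq \Big(\sum_{i=1}^N \tfrac{(\tilde{G}_k^i)^2}{\bar{C}_i}\Big)\Big(\sum_{i=1}^N \bar{C}_i\Big),
\]
so that $\sum_i \tfrac{(\tilde{G}_k^i)^2}{\bar{C}_i} \geq \tfrac{\tilde{G}_k^2}{\bar{C}}$, recovering exactly the single-block bound. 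From here the remainder is identical to Lemma \ref{fw-sd-lemma}: multiply through by $8L_\nabla$, apply $(a+b)^2 \leq 2(a^2+b^2)$ to fuse the $x$- and $y$-gains into $\big(\tilde{G}_k\sqrt{2L_\nabla/\bar{C}} + \|\hat{h}_k\|_{Y\ast}\big)^2$, bound the gradient-error term using (A3) together with Lemma \ref{var_lem}, and finally invoke Lemma \ref{lem:jensen_gap} with Jensen's inequality on $t \mapsto t^2$ to produce $G(x_k,y_k)^2$ (now defined with $\bar{C} = \sum_i \bar{C}_i$). This yields the stated bound with $\alpha_{\mathrm{AD}} = 1$.

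For the case $\mathrm{AlternativeDirections} = $ TRUE, I would decompose $\Delta_k$ as in \eqref{decompose}; the Frank-Wolfe/steepest-descent portion is controlled by the Case~1 argument above, contributing $G(x_k,y_k)^2 - 4\kappa^2\sigma^2/b_k$. For the block alternative-direction step (Subroutine \ref{alg:block_alt}), I apply smoothness at $(\bar{x}_k, y_{k+1})$, use $\|x_{k+1} - \bar{x}_k\|_X^2 = \sum_i (\bar{\beta}_k^i)^2\|d_k^{(i)}\|_{X,i}^2 \leq \sum_i (\bar{\beta}_k^i)^2\diam(S_i)^2$ with $\bar{C}_i \geq 2L_\nabla\diam(S_i)^2$, and split the error term as before. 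The ``do no harm'' observation carries over blockwise: since $\bar{\beta}_k^i = \min\{A_k^i/\bar{C}_i, \alpha_k^{\mathrm{stop},i}\}$ minimizes $\beta \mapsto -\beta A_k^i + \tfrac{\bar{C}_i}{2}\beta^2$ on $[0,\alpha_k^{\mathrm{stop},i}]$, each term $-\bar{\beta}_k^i A_k^i + \tfrac{\bar{C}_i}{2}(\bar{\beta}_k^i)^2 \leq 0$, so summing over $i$ leaves only the aggregate error and produces the analogue of \eqref{therefore_zzz}. Taking expectations gives $\bbE[8L_\nabla(F(\bar{x}_k,y_{k+1}) - F(x_{k+1},y_{k+1})) \mid \calG_k] \geq -4\kappa^2\sigma^2/b_k$, and combining the two parts via the tower property delivers the result with $\alpha_{\mathrm{AD}} = 2$.

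The main obstacle is precisely the Cauchy--Schwarz aggregation: it is what allows the block guarantee to coincide with the non-block one despite the use of distinct per-block step-sizes, and identifying $\bar{C} = \sum_i \bar{C}_i$ as the correct normalizing constant is exactly what makes that inequality close. Everything else is careful bookkeeping that leverages the separability of $S = S_1 \times \cdots \times S_N$, of the norm $\|\cdot\|_X$, of the gap $\tilde{G} = \sum_i \tilde{G}_i$, and of the dual norm $\|\cdot\|_{X\ast}$.
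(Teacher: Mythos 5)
Your proposal is correct and follows essentially the same route as the paper's own proof: the blockwise smoothness bookkeeping, the Cauchy--Schwarz aggregation $\sum_{i=1}^N (\tilde{G}_k^i)^2/\bar{C}_i \geq \tilde{G}_k^2/\bar{C}$ with $\bar{C} = \sum_{i=1}^N \bar{C}_i$ (which the paper likewise identifies as the key new step), and the blockwise ``do no harm'' minimization of $\beta \mapsto -\beta A_k^i + \tfrac{\bar{C}_i}{2}\beta^2$ for the alternative-direction case are exactly the paper's argument. The only detail worth making explicit, which your final paragraph already covers, is that separability of $S$ is what lets $\tilde{G}_k = \sum_{i=1}^N \tilde{G}_k^i$ be interpreted as the overall stochastic gap $\max_{x \in S}\{\hat{g}_k^T(x_k - x)\}$ so that Lemma \ref{lem:jensen_gap} applies.
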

\begin{proof}
{\bf Case 1: AlternativeDirections = FALSE.}
Let us define $\Theta_k := 8L_{\nabla}(F(x_{k}, y_{k}) - F(\bar{x}_k, y_{k+1}))$. We first bound $\Theta_k$ following the same general structure as in the proof of Lemma \ref{fw-sd-lemma} in Section \ref{first_lem}.

Applying \eqref{standard_smoothness} at Steps (2.)/(3.) of the block coordinate version of Algorithm 2, we have deterministically:
\begin{align}\label{first_block_eqn}
F(\bar{x}_{k}, y_{k+1}) &\leq F(x_k, y_k) + \nabla F(x_k, y_k)^T((\bar{x}_{k}, y_{k+1}) - (x_k, y_k)) + \tfrac{L_\nabla}{2}\|(\bar{x}_{k}, y_{k+1}) - (x_k, y_k)\|^2 \nonumber \\
&= F(x_k, y_k) + \nabla_x F(x_k, y_k)^T(\bar{x}_{k} - x_k) + \tfrac{L_\nabla}{2}\|\bar{x}_{k} - x_k\|_X^2 \\
& \ \ \ \ \ \ \ \ \ \ \ \ \ \ \ \ + \nabla_y F(x_k, y_k)^T(y_{k+1} - y_k) + \tfrac{L_\nabla}{2}\|y_{k+1} - y_k\|_Y^2 \nonumber
\end{align}
For ease of notation, define $\Gamma_k := \nabla_x F(x_k, y_k)^T(\bar{x}_{k} - x_k) + \tfrac{L_\nabla}{2}\|\bar{x}_{k} - x_k\|_X^2$. Utilizing the block coordinate structure, we have that:
\begin{align*}
\Gamma_k &= \nabla_x F(x_k, y_k)^T(\bar{x}_{k} - x_k) + \tfrac{L_\nabla}{2}\|\bar{x}_{k} - x_k\|_X^2 \\
&= \sum_{i = 1}^N \nabla_x^{(i)} F(x_k, y_k)^T(\bar{x}_{k}^{(i)} - x_k^{(i)}) ~+~ \tfrac{L_\nabla}{2}\sum_{i = 1}^N \|\bar{x}_{k}^{(i)} - x_k^{(i)}\|_{X,i}^2 \\
&= \sum_{i = 1}^N\left[\nabla_x^{(i)} F(x_k, y_k)^T(\bar{x}_{k}^{(i)} - x_k^{(i)}) + \tfrac{L_\nabla}{2}\|\bar{x}_{k}^{(i)} - x_k^{(i)}\|_{X,i}^2 \right] \\
&= \sum_{i = 1}^N\left[\bar{\alpha}_k^i\nabla_x^{(i)} F(x_k, y_k)^T(\tilde{x}_k^{(i)} - x_k^{(i)}) + \tfrac{L_\nabla(\bar{\alpha}_k^i)^2}{2}\|\tilde{x}_{k}^{(i)} - x_k^{(i)}\|_{X,i}^2 \right] \\
&\leq \sum_{i = 1}^N\left[\bar{\alpha}_k^i\nabla_x^{(i)} F(x_k, y_k)^T(\tilde{x}_k^{(i)} - x_k^{(i)}) + \tfrac{L_\nabla\diam(S_i)^2(\bar{\alpha}_k^i)^2}{2} \right] \\
&= \sum_{i = 1}^N\left[\bar{\alpha}_k^i(\hat{g}_k^{(i)})^T(\tilde{x}_k^{(i)} - x_k^{(i)}) + \bar{\alpha}_k^i(\nabla_x^{(i)}F(x_k, y_k) - \hat{g}_k^{(i)})^T(\tilde{x}_k^{(i)} - x_k^{(i)}) + \tfrac{L_\nabla\diam(S_i)^2(\bar{\alpha}_k^i)^2}{2} \right] \\
&= \sum_{i = 1}^N\left[-\bar{\alpha}_k^i\tilde{G}_k^i + \bar{\alpha}_k^i(\nabla_x^{(i)}F(x_k, y_k) - \hat{g}_k^{(i)})^T(\tilde{x}_k^{(i)} - x_k^{(i)}) + \tfrac{L_\nabla\diam(S_i)^2(\bar{\alpha}_k^i)^2}{2} \right] \\
&\leq \sum_{i = 1}^N\left[-\bar{\alpha}_k^i\tilde{G}_k^i + \tfrac{1}{2L_{\nabla}}\|\nabla_x^{(i)}F(x_k, y_k) - \hat{g}_k^{(i)}\|_{X\ast,i}^2 + \tfrac{L_{\nabla}(\bar{\alpha}_k^i)^2}{2}\|\tilde{x}_{k}^{(i)} - x_k^{(i)}\|_{X,i}^2 + \tfrac{L_\nabla\diam(S_i)^2(\bar{\alpha}_k^i)^2}{2} \right] \\
&\leq \sum_{i = 1}^N\left[-\bar{\alpha}_k^i\tilde{G}_k^i + \tfrac{1}{2L_{\nabla}}\|\nabla_x^{(i)}F(x_k, y_k) - \hat{g}_k^{(i)}\|_{X\ast,i}^2 + \tfrac{\bar{C}_i(\bar{\alpha}_k^i)^2}{2} \right] \\
&= \sum_{i = 1}^N\left[-\bar{\alpha}_k^i\tilde{G}_k^i + \tfrac{\bar{C}_i(\bar{\alpha}_k^i)^2}{2} \right] ~+~ + \tfrac{1}{2L_{\nabla}}\|\nabla_x F(x_k, y_k) - \hat{g}_k\|_{X\ast}^2 \ .
\end{align*}
The second as well as the final equality above uses the Euclidean structure of $\|\cdot\|_X$ and $\|\cdot\|_{X\ast}$, namely $\|x\|_X^2 := \sum_{i = 1}^N \|x^{(i)}\|_{X,i}^2$ and $\|s\|_{X\ast}^2 := \sum_{i = 1}^N \|s^{(i)}\|_{X\ast,i}^2$. The second inequality uses $(s^{(i)})^Tx^{(i)} \leq \tfrac{1}{2\gamma}\|s^{(i)}\|_{X\ast,i}^2 + \tfrac{\gamma}{2}\|x^{(i)}\|_{X,i}^2$ with $\gamma \gets L_\nabla$, $s \gets \nabla_x^{(i)} F(x_k, y_k) - \hat{g}_k^{(i)}$ and $x \gets \bar{\alpha}_k^i(\tilde{x}_k^{(i)} - x_k^{(i)})$ for each $i \in \{1, \ldots, N\}$, and the third inequality uses $\bar{C}_i \geq 2L_{\nabla}\diam(S_i)^2$. Now, combining \eqref{first_block_eqn} with the above as well as the same reasoning on the space of $y$ variables that was used in the proof of Lemma 2.2 yields:
\begin{align*}
F(\bar{x}_{k}, y_{k+1}) &\leq F(x_k, y_k) + \sum_{i = 1}^N\left[-\bar{\alpha}_k^i\tilde{G}_k^i + \tfrac{\bar{C}_i(\bar{\alpha}_k^i)^2}{2} \right] +  \tfrac{1}{2L_{\nabla}}\|\nabla_x F(x_k, y_k) - \hat{g}_k\|_{X\ast}^2 \\
& \ \ \ \ \ \ \ \ \ \ \ \ \ \ \ \ -\alpha_k\|\hat{h}_k\|_{Y\ast} + \tfrac{1}{2L_\nabla}\|\nabla_y F(x_k, y_k) -\hat{h}_k\|_{Y\ast}^2 + L_\nabla\alpha_k^2 \ .
\end{align*}
Using $\bar\alpha_k^{i} = \tilde{G}_k^i/\bar{C}_i$, and $\alpha_k = \|\hat{h}_k\|_{Y\ast}/2L_{\nabla}$ yields:
\begin{align*}
F(\bar{x}_{k}, y_{k+1}) &\leq F(x_k, y_k) - \sum_{i = 1}^N \frac{(\tilde{G}_k^{i})^2}{2\bar{C}_i} - \frac{\|\hat{h}_k\|_{Y\ast}^2}{4L_{\nabla}} + \frac{1}{2L_\nabla}\|\nabla_x F(x_k, y_k) - \hat{g}_k\|_{X\ast}^2 + \frac{1}{2L_\nabla}\|\nabla_y F(x_k, y_k) -\hat{h}_k\|_{Y\ast}^2 \\
&= F(x_k, y_k) - \sum_{i = 1}^N \frac{(\tilde{G}_k^{i})^2}{2\bar{C}_i} - \frac{\|\hat{h}_k\|_{Y\ast}^2}{4L_{\nabla}} + \frac{1}{2L_\nabla}\|(\nabla_x F(x_k, y_k), \nabla_y F(x_k, y_k)) - (\hat{g}_k, \hat{h}_k)\|_\ast^2 \ .
\end{align*}
Letting $\Theta_k := 8L_{\nabla}(F(x_k, y_k) - F(\bar{x}_{k}, y_{k+1}))$ and multiplying the above inequality by $8L_{\nabla}$ and rearranging terms yields:
\begin{equation}\label{blah_blah}
\Theta_k \geq 4L_{\nabla}\sum_{i = 1}^N\frac{(\tilde{G}_k^{i})^2}{\bar{C}_i} + 2\|\hat{h}_k\|_{Y\ast}^2 - 4\|(\nabla_x F(x_k, y_k), \nabla_y F(x_k, y_k)) - (\hat{g}_k, \hat{h}_k)\|_\ast^2 \ .
\end{equation}
Recall that for any two sequences $\{g_i\}_{i = 1}^N$ and $\{c_i\}_{i = 1}^N$ with $g_i \geq 0$ and $c_i > 0$, Cauchy-Schwartz yields:
\begin{equation*}
\left(\sum_{i = 1}^N g_i\right)^2 = \left(\sum_{i = 1}^N \frac{g_i\sqrt{c_i}}{\sqrt{c_i}}\right)^2 ~\leq~ \left(\sum_{i = 1}^N \frac{g_i^2}{c_i}\right)\left(\sum_{i = 1}^N c_i\right) \ .
\end{equation*}
Recall that $\bar{C} = \sum_{i = 1}^N \bar{C}_i$ and let us define $\tilde{G}_k := \sum_{i = 1}^N \tilde{G}_k^i$. 
Then, applying the above to \eqref{blah_blah} with $g_i \gets \tilde{G}_k^i$ and $c_i \gets \bar{C}_i$ yields:
\begin{align*}
\Theta_k &\geq \frac{4L_\nabla\tilde{G}_k^2}{\bar{C}} + 2\|\hat{h}_k\|_{Y\ast}^2 -4\|(\nabla_x F(x_k, y_k), \nabla_y F(x_k, y_k)) - (\hat{g}_k, \hat{h}_k)\|_\ast^2 \\
&\geq \left(\tilde{G}_k\sqrt{\frac{2L_\nabla}{\bar{C}}} + \|\hat{h}_k\|_{Y\ast}\right)^2 - 4\|(\nabla_x F(x_k, y_k), \nabla_y F(x_k, y_k)) - (\hat{g}_k, \hat{h}_k)\|_\ast^2 \ ,
\end{align*}
where the second inequality uses $(a + b)^2 \leq 2(a^2 + b^2)$. By combining assumption (A3) with Lemma \ref{var_lem}, we have that
\begin{equation*}
\bbE\left[\|(\nabla_x F(x_k, y_k), \nabla_y F(x_k, y_k)) - (\hat{g}_k, \hat{h}_k)\|_\ast^2 ~|~ \calF_k\right] \leq \frac{\kappa^2\sigma^2}{b_k} \ .
\end{equation*}
Furthermore, note that the decomposable structure of $S$ implies that $\tilde{G}_k = \max_{x \in S}\left\{\hat{g}_k^T(\bar{x} - x)\right\}$. Therefore, we may apply Lemma 2.1 along with Jensen's inequality on $t \mapsto t^2$ to yield:
\begin{equation*}
G(x_k, y_k)^2 \leq \left(\bbE\left[\tilde{G}_k\sqrt{\tfrac{2L_\nabla}{\bar{C}}} + \|\hat{h}\|_{Y\ast} ~|~ \calF_k\right]\right)^2 \leq \bbE\left[\left(\tilde{G}_k\sqrt{\tfrac{2L_\nabla}{\bar{C}}} + \|\hat{h}\|_{Y\ast}\right)^2 ~|~ \calF_k\right] \ .
\end{equation*}
Combining the previous inequalities together yields:
\begin{equation*}
\bbE[\Theta_k ~|~ \calF_k] ~\geq~ G(x_k, y_k)^2 - \frac{4\kappa^2\sigma^2}{b_k} \ ,
\end{equation*}
which proves the result in Case 1 since $x_{k+1} = \bar{x}_k$ in that case.

{\bf Case 2: AlternativeDirections = TRUE.}
Now notice that we can decompose $\Delta_k$ as:
\begin{equation}\label{decompose_block}
\Delta_k ~=~ 8L_{\nabla}(F(x_{k}, y_{k}) - F(\bar{x}_k, y_{k+1})) ~+~ 8L_{\nabla}(F(\bar{x}_k, y_{k+1}) - F(x_{k+1}, y_{k+1})) \ .
\end{equation}
Let us define $\Lambda_k := 8L_{\nabla}(F(\bar{x}_k, y_{k+1}) - F(x_{k+1}, y_{k+1}))$ so that $\Delta_k = \Theta_k + \Lambda_k$.

Let us now work on bounding $\Lambda_k$ using the same general structure as that of Case 2 of Lemma \ref{fw-sd-lemma} in Section \ref{first_lem}. Let $\calG_k$ denote the $\sigma$-field of all information gathered after completing Step (3.) of iteration $k$ of the block coordinate version of Algorithm 2, i.e., right before starting Step (4.) (the alternative direction step). Note that $\calF_k \subset \calG_k$. Applying \eqref{standard_smoothness} at Step (4.), we have deterministically:
\begin{align*}
F(x_{k+1}, y_{k+1}) &\leq F(\bar{x}_k, y_{k+1}) + \nabla F(\bar{x}_k, y_{k+1})^T((x_{k+1}, y_{k+1}) - (\bar{x}_k, y_{k+1})) + \tfrac{L_\nabla}{2}\|(x_{k+1}, y_{k+1}) - (\bar{x}_k, y_{k+1})\|^2 \\
&= F(\bar{x}_k, y_{k+1}) + \nabla_x F(\bar{x}_k, y_{k+1})^T(x_{k+1} - \bar{x}_k) + \tfrac{L_\nabla}{2}\|x_{k+1} - \bar{x}_k\|_X^2 \\
&= F(\bar{x}_k, y_{k+1}) + \sum_{i = 1}^N\left[\nabla_x^{(i)} F(\bar{x}_k, y_{k+1})^T(x_{k+1}^{(i)} - \bar{x}_k^{(i)}) + \tfrac{L_{\nabla}}{2}\|x_{k+1}^{(i)} - \bar{x}_k^{(i)}\|_{X,i}^2 \right] \\
&= F(\bar{x}_k, y_{k+1}) + \sum_{i = 1}^N\left[\bar{\beta}_k^i\nabla_x^{(i)} F(\bar{x}_k, y_{k+1})^Td_k^{(i)} + \tfrac{L_{\nabla}(\bar{\beta}_k^i)^2}{2}\|d_k^{(i)}\|_{X,i}^2 \right] \\
&\leq F(\bar{x}_k, y_{k+1}) + \sum_{i = 1}^N\left[\bar{\beta}_k^i\nabla_x^{(i)} F(\bar{x}_k, y_{k+1})^Td_k^{(i)} + \tfrac{L_{\nabla}\diam(S_i)^2(\bar{\beta}_k^i)^2}{2} \right] \\
&= F(\bar{x}_k, y_{k+1}) + \sum_{i = 1}^N\left[\bar{\beta}_k^i (\check{g}_k^{(i)})^Td_k^{(i)} + \bar{\beta}_k^i(\nabla_x^{(i)} F(\bar{x}_k, y_{k+1}) - \check{g}_k^{(i)})^Td_k^{(i)} + \tfrac{L_{\nabla}\diam(S_i)^2(\bar{\beta}_k^i)^2}{2} \right] \\
&= F(\bar{x}_k, y_{k+1}) + \sum_{i = 1}^N\left[-\bar{\beta}_k^i A_k^i + \bar{\beta}_k^i(\nabla_x^{(i)} F(\bar{x}_k, y_{k+1}) - \check{g}_k^{(i)})^Td_k^{(i)} + \tfrac{L_{\nabla}\diam(S_i)^2(\bar{\beta}_k^i)^2}{2} \right]
\end{align*}

Applying the inequality $(s^{(i)})^Tx^{(i)} \leq \tfrac{1}{2\gamma}\|s^{(i)}\|_{X\ast, i}^2 + \tfrac{\gamma}{2}\|x^{(i)}\|_{X,i}^2$ with $\gamma \gets L_\nabla$, $s \gets \nabla_x^{(i)} F(\bar{x}_k, y_{k+1}) - \check{g}_k^{(i)}$ and $x \gets \bar{\beta}_k^id_k^{(i)}$ yields:
\begin{align*}
F(x_{k+1}, y_{k+1}) &\leq F(\bar{x}_k, y_{k+1}) ~+ \\
& \ \ \ \ \ \ \ \sum_{i = 1}^N\left[-\bar{\beta}_k^i A_k^i + \tfrac{1}{2L_{\nabla}}\|\nabla_x^{(i)} F(\bar{x}_k, y_{k+1}) - \check{g}_k^{(i)}\|_{X\ast,i}^2 + \tfrac{L_{\nabla}(\bar{\beta}_k^i)^2}{2}\|d_k^{(i)}\|_{X,i}^2 + \tfrac{L_{\nabla}\diam(S_i)^2(\bar{\beta}_k^i)^2}{2} \right] \\
&\leq F(\bar{x}_k, y_{k+1}) + \sum_{i = 1}^N\left[-\bar{\beta}_k^i A_k^i + \tfrac{1}{2L_{\nabla}}\|\nabla_x^{(i)} F(\bar{x}_k, y_{k+1}) - \check{g}_k^{(i)}\|_{X\ast,i}^2 + \tfrac{\bar{C}_i(\bar{\beta}_k^i)^2}{2} \right] \\
&= F(\bar{x}_k, y_{k+1}) + \sum_{i = 1}^N\left[-\bar{\beta}_k^i A_k^i + \tfrac{\bar{C}_i(\bar{\beta}_k^i)^2}{2} \right] ~+ \tfrac{1}{2L_{\nabla}}\|\nabla_x F(\bar{x}_k, y_{k+1}) - \check{g}_k\|_{X\ast}^2 \ ,
\end{align*}
where the second inequality uses $\|d_k^{(i)}\|_{X,i} \leq \diam(S_i)$ and $\bar{C}_i \geq 2L_\nabla\cdot\diam(S_i)^2$. 

Notice that $\bar{\beta}_k^i = \min\left\{A_k^i/\bar{C}_i, \alpha^{\text{stop},i}_k\right\}$ minimizes the quadratic function $\beta \mapsto -\beta A_k^i + \tfrac{\bar{C}_i\beta^2}{2}$ on the interval $[0, \alpha^{\text{stop},i}_k]$. Hence, in particular we have that $- \bar{\beta}_k^iA_k^i + \tfrac{\bar{C}_i(\bar{\beta}_k^i)^2}{2} \leq 0$ and therefore:
\begin{equation*}
F(x_{k+1}, y_{k+1}) ~\leq~ F(\bar{x}_k, y_{k+1}) + \tfrac{1}{2L_{\nabla}}\|\nabla_x F(\bar{x}_k, y_{k+1}) - \check{g}_k\|_{X\ast}^2 \ .
\end{equation*}

The remainder of the proof is now exactly the same as that of Lemma \ref{fw-sd-lemma} starting at \eqref{therefore_zzz}.
\end{proof}

\subsection{Proof of Theorem 3.1}
Given Lemma \ref{block-alt-fw-sd-lemma}, the proof of Theorem 3.1 follows the exact same logic as that of Theorem 2.1 in Section \ref{big_theorem}.

\section{Additional numerical results on synthetic data}
In this set of experiments, we generated artificial data from a model that is described by an artificially generated sparse network. In particular, the network is composed of three layers of sizes $50 \times 50$, $50 \times 50$ and $50 \times 1$, and the activation functions are either ReLU or sigmoid (notice that sigmoid is smooth, but ReLU is not). We did not add bias terms for this experiment. For the first two layers of the true network, we randomly generated $m$ edges going into each node where $m \in \{5, 10, 15\}$ and the weight on each randomly sampled edge is either -1 or 1 with equal probability. All edges that are not selected have their weight equal to 0, and the last layer is fully connected.
We treated this as a regression problem with mean squared error loss, the feature matrix is composed of elements that are sampled i.i.d. from a standard Gaussian distribution, and the dependent variable values $y$ are chosen in a way to control the signal to noise ratio (SNR) in the set $SNR \in \{1, 5, 10\}$.
Our train, validation, and test sets are composed of 100,000, 20,000, and 100,000 data points, respectively.
For SFW and SFW-IF, the first two layers are treated as Frank-Wolfe layers and the last layer is fully dense and treated as an SGD layer.

For each combination of $m \in \{5, 10, 15\}$ and $SNR \in \{1, 5, 10\}$, we ran 30 trials over randomly generated true networks and datasets as described above. 
Figures 1 and 2 show box plots over these 30 trials, with three performance metrics of interest:  average number of non-zero edges (here average number of non-zeros per edge can be at most 50) going into each node in layers 1 and 2 (every value below 0.001 is considered to be 0), and the test set mean squared error. All Figures show that SFW and SFW-IF recover solutions that are sparser than SGD, which does not not promote this behaviour. SFW-IF is also able to consistently recover a solution that is sparser than SFW due to the incorporation of in-face directions (except for layer 2 using ReLU). Interestingly, using the sigmoid activation, the gap between the sparsity of SFW-IF and SFW is larger for layer 2 than layer 1; however, using the ReLU activation, this pattern is reversed.
Also, SFW-IF and SFW have comparable test set MSE to SGD, and for the sigmoid experiment they even have lower test MSE for the case of $m \in \{10, 15\}$. Figures 3 and 4 also consider a single instance with $m = 10$ and $SNR = 10$, using the sigmoid activiation, and display the evolution of the per layer average non-zeros and also the modified Frank-Wolfe gap. (Since we are only able to compute a stochastic estimate of the modified Frank-Wofle gap, we also display a smoothed version of this plot.) Notice that, throughout all iterations, SFW-IF consistently maintains a sparser solution than SFW. Interestingly, it appears that SFW-IF is also able to reduce the modified Frank-Wolfe gap faster than SFW.

\begin{figure}\label{fig:Sigmoid}
	\centering
	\begin{subfigure}{} 
		\includegraphics[width=\textwidth]{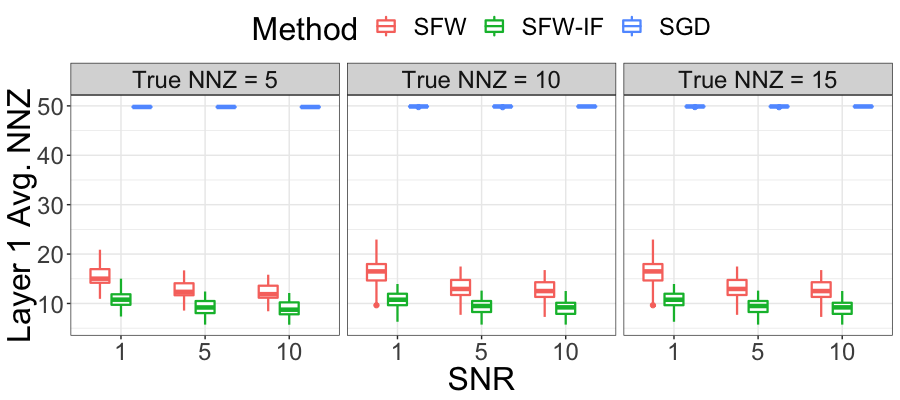}
	\end{subfigure}
	\vspace{1em} 
	\begin{subfigure}{} 
		\includegraphics[width=\textwidth]{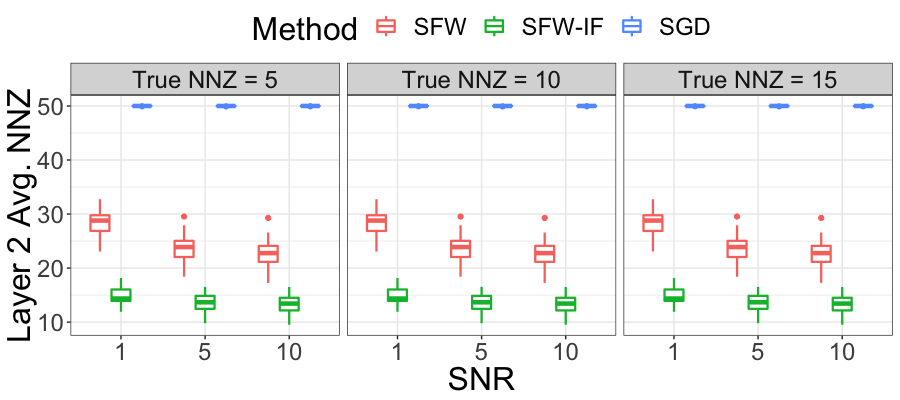}
	\end{subfigure}
	\begin{subfigure}{} 
		\includegraphics[width=\textwidth]{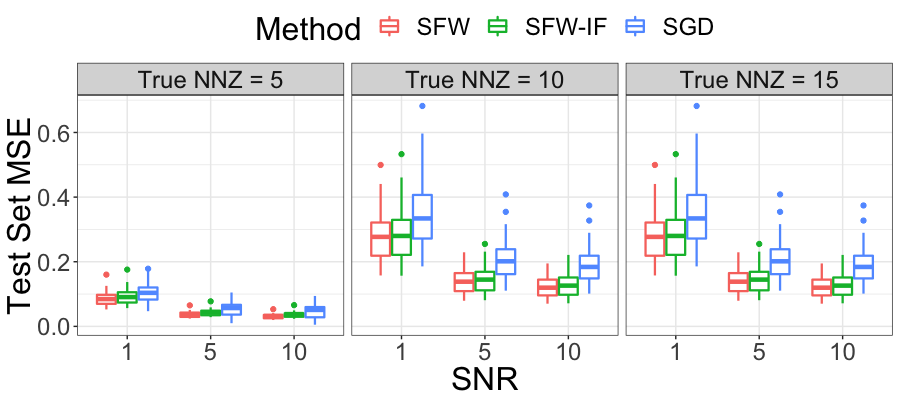}
	\end{subfigure}
	\caption{Results using sigmoid activation function} 
\end{figure}

\begin{figure}\label{fig:relu}
	\centering
	\begin{subfigure}{} 
		\includegraphics[width=\textwidth]{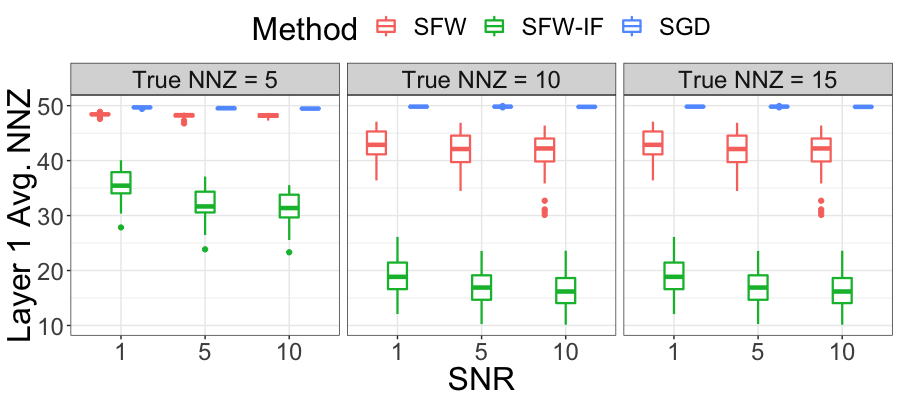}
	\end{subfigure}
	\vspace{1em} 
	\begin{subfigure}{} 
		\includegraphics[width=\textwidth]{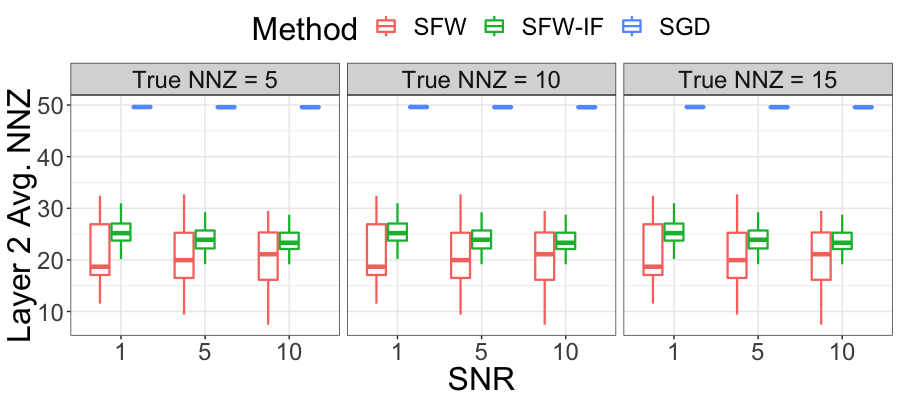}
	\end{subfigure}
	\begin{subfigure}{} 
		\includegraphics[width=\textwidth]{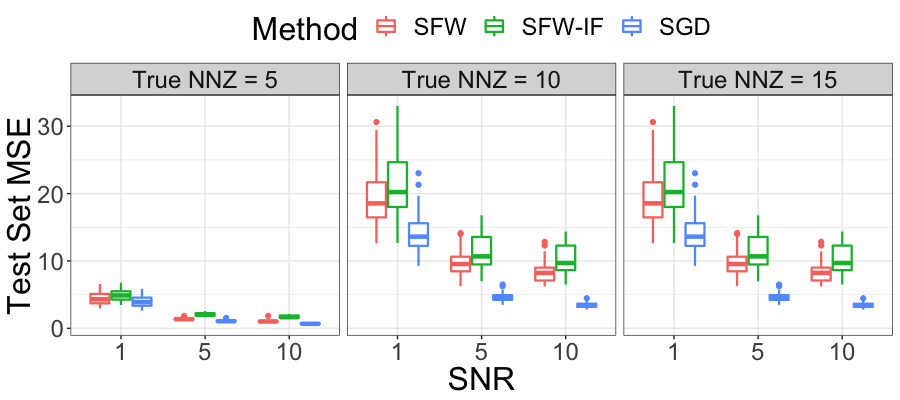}
	\end{subfigure}
	\caption{Results using ReLU activiation function} 
\end{figure}


\begin{figure}\label{fig:long1}
	\centering
	\begin{subfigure}{} 
		\includegraphics[width=0.75\textwidth]{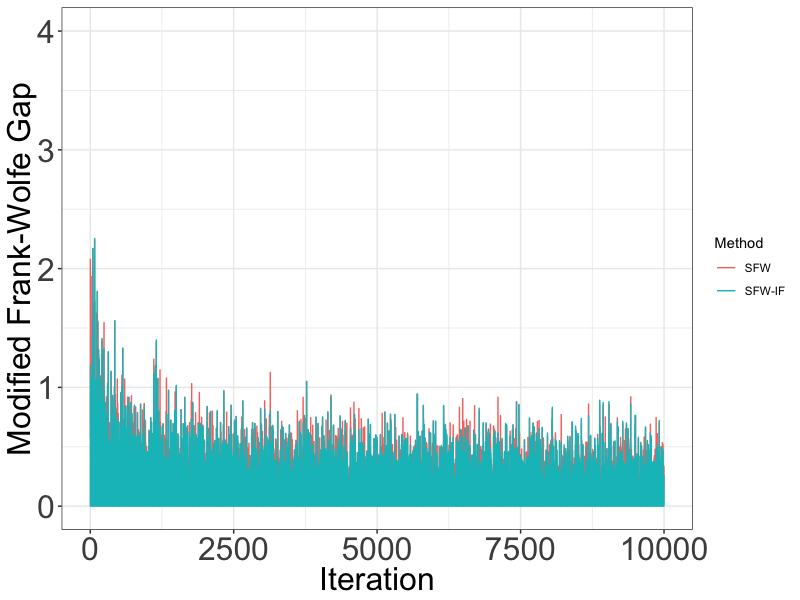}
	\end{subfigure}
	\vspace{1em} 
	\begin{subfigure}{} 
		\includegraphics[width=0.75\textwidth]{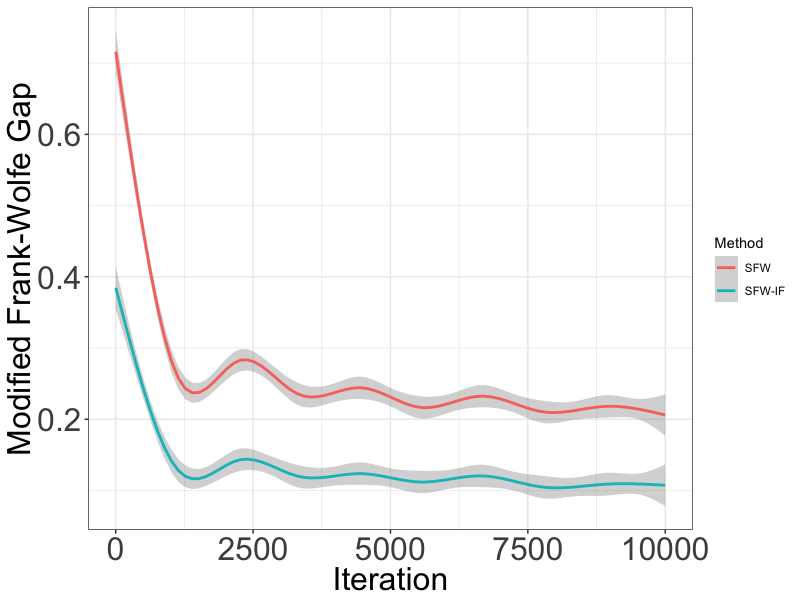}
	\end{subfigure}
	\caption{Modified Frank-Wolfe gap vs. iterations for SFW and SFW-IF, on an instance from a network generated using the sigmoid activation function with $m = 10$ non-zeros and $SNR = 10$.} 
\end{figure}

\begin{figure}\label{fig:long2}
	\centering
	\begin{subfigure}{} 
		\includegraphics[width=0.75\textwidth]{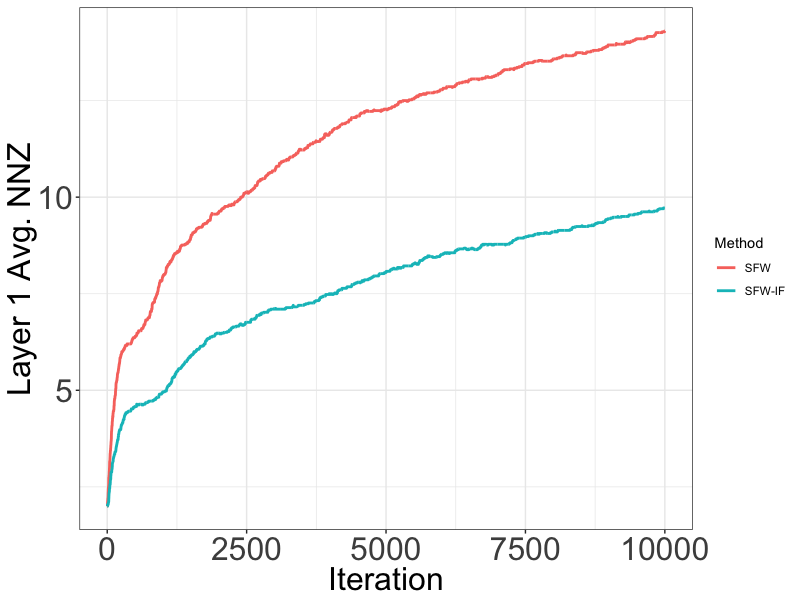}
	\end{subfigure}
	\vspace{1em} 
	\begin{subfigure}{} 
		\includegraphics[width=0.75\textwidth]{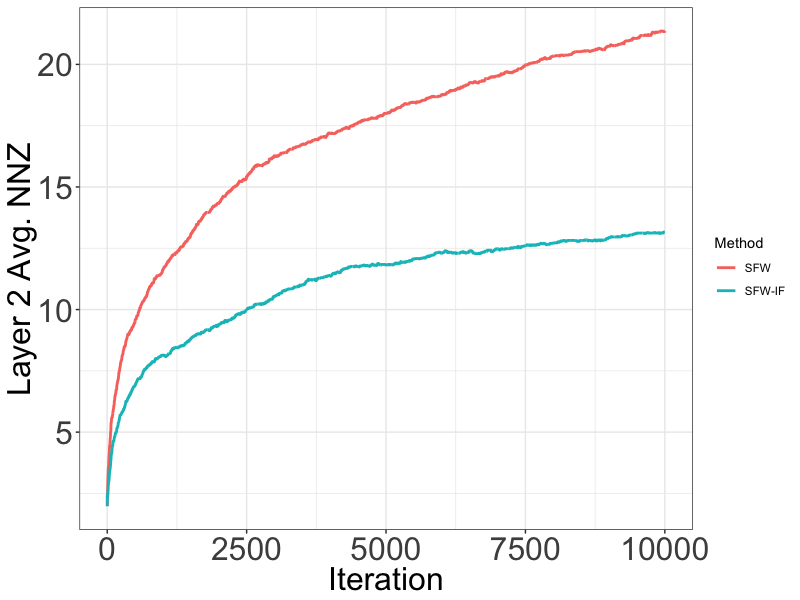}
	\end{subfigure}
	\caption{Average number of non-zeros (NNZ) per layer for SFW and SFW-IF, on an instance from a network generated using the sigmoid activation function with $m = 10$ non-zeros and $SNR = 10$.} 
\end{figure}

\end{document}